\crefname{hypothesis}{Hypothesis}{Hypotheses}
\Crefname{ALC@unique}{Line}{Lines}
\colorlet{texcscolor}{blue!50!black}
\colorlet{texemcolor}{red!70!black}
\colorlet{texpreamble}{red!70!black}
\colorlet{codebackground}{black!25!white!25}
\lstdefinestyle{siamlatex}{%
  style=tcblatex,
  texcsstyle=*\color{texcscolor},
  texcsstyle=[2]\color{texemcolor},
  keywordstyle=[2]\color{texemcolor},
  moretexcs={cref,Cref,maketitle,mathcal,text,headers,email,url},
}
\DeclareTotalTCBox{\code}{ v O{} }
{ 
  fontupper=\ttfamily\color{black},
  nobeforeafter,
  tcbox raise base,
  colback=codebackground,colframe=white,
  top=0pt,bottom=0pt,left=0mm,right=0mm,
  leftrule=0pt,rightrule=0pt,toprule=0mm,bottomrule=0mm,
  boxsep=0.5mm,
  #2}{#1}
\patchcmd\newpage{\vfil}{}{}{}
\renewcommand{\d}{\ensuremath{\mathrm{d}}}
\title{Probability Flow Approach to the Onsager--Machlup Functional for Jump-Diffusion Processes
\thanks{
Submitted to the editors DATE.
\funding{XZ acknowledges the support from 
 Hong Kong General Research Funds  (11308121, 11318522,   11308323),  and  the NSFC/RGC Joint Research Scheme [RGC Project No. N-CityU102/20 and NSFC Project No.
12061160462]. JQD acknowledges the support from the NSFC grant 12141107 and  the Guangdong-Dongguan Joint Research Grant 2023A151514 0016.}}}
\author{Yuanfei Huang\thanks{Department of Data Science, City University of Hong Kong, Kowloon, Hong Kong SAR (\email{yhuan26@cityu.edu.hk}).}
\and Xiang Zhou\thanks{Department of Data Science and Department of Mathematics, City University of Hong Kong, Kowloon, Hong Kong SAR (Corresponding Author. \email{ xizhou@cityu.edu.hk}). }
\and Jinqiao Duan\thanks{Department of Mathematics and Department of Physics, Great Bay University, Dongguan, Guangdong 523000, China. (\email{duan@gbu.edu.cn})}
 }
\begin{document}
\maketitle

\begin{tcbverbatimwrite}{tmp_\jobname_abstract.tex}
\begin{abstract}
The Onsager--Machlup action functional is an important concept in statistical mechanics and thermodynamics to describe the probability of fluctuations in nonequilibrium systems. It provides a powerful tool for analyzing and predicting the behavior of complex stochastic systems.
For    diffusion process, 
the path integral method and the Girsanov transformation are two main approaches to construct the Onsager--Machlup functional.
 However, it is a long-standing challenge to  apply these two methods to      the jump-diffusion process, because  the complexity of jump noise presents intrinsic   technical barriers to derive the Onsager--Machlup functional. In this work, we propose a new strategy to solve this problem by   utilizing the equivalent probabilistic flow between the pure diffusion process and the jump-diffusion process. For the first time, we rigorously establish the closed-form expression of the Onsager--Machlup functional for jump-diffusion processes with finite jump activity, which include an important term of the L\'{e}vy intensity at the origin. The same probability flow approach is further applied to    the  L\'{e}vy process with infinite jump activity, 
  and yields a time-discrete version of the   Onsager--Machlup functional.
\end{abstract}

\begin{keywords}
stochastic dynamical systems, Onsager--Machlup functional, non-Gaussian noise, probability flow, jump-diffusion process
\end{keywords}

\begin{MSCcodes}
60H10, 60J60, 37H05, 82C31
\end{MSCcodes}
\end{tcbverbatimwrite}
\input{tmp_\jobname_abstract.tex}

\section{Background and Introduction}
\label{sec:intro}

\subsection{The classical Onsager--Machlup action functional}

The most likely path in a system affected by noise can be well  described by the minimizer of  the \emph{Onsager--Machlup action functional} (OM functional), which is a famous result in the theory of thermodynamic fluctuations, and is frequently employed in the meta-stable systems  to study transition mechanisms.  
This variational principle based on Onsager--Machlup as well as other action functionals \cite{FW2012} has stimulated many interesting modelling, computation  and application  works,  such as path integrals \cite{Kath1981path,Wio2013}, minimum action paths \cite{weinan2004minimum,zhou2008adaptive,lin2019quasi,zhou2010study,wan2010study,ge2012analytical,du2021graph}, trajectory sampling \cite{KA20,Lu2017}, transition-path theory \cite{EVE10, GLLL23}, transition rates \cite{Zuckerman2000, Gobbo2012}, biomolecular models \cite{Faccioli2006, Wang2006,QianGe2021Book}, information projection \cite{selk2021information, bierkens2014explicit} and Bayesian inverse problems \cite{dashti2013map, AKLS21}.

In 1953, when Onsager and Machlup \cite{OM53, MO53} examined the probabilities of thermodynamic state trajectories in irreversible processes, they first introduced the OM functional for Gaussian systems by using Feynman's path integral to express the transition probability. Their work was restricted to It\^{o} stochastic differential equations (SDEs) with  constant diffusion coefficients and linear drifts. Tisza and Manning \cite{tisza1957fluctuations}  expanded the OM functional to SDEs that have nonlinear drift terms. The mathematical progress in the works of 
\cite{Durr1978} and \cite{Ikeda1980} 
rigorously established the expression of the OM functional for reversible and general It\^{o} diffusion SDEs, respectively, by investigating  the probability that a trajectory is inside   the tube along a  differentiable curve. They interpreted the OM functional as the action in the infinitesimal limit of the tube size.

The   OM functional  is well-known for   the   following It\^{o}  diffusion  SDE in $\mathbb{R}^d$,
\begin{equation}
    \left\{\begin{aligned} 
    \label{SDE}
        \d X_t=&\ b(X_{t})\d t +  \sigma\d B_t,\quad t>0,\\
         X_0=&\ x_0\in\mathbb{R}^d,
    \end{aligned}\right.
\end{equation}
where $b$ is a measurable vector function, $\sigma$ is a constant, and  $B$ is a standard Brownian motion in $\mathbb{R}^d$.  
For  \eqref{SDE}, the OM functional in a   time interval $[0,T]$ is given by 
{\begin{equation} \label{eq:OM4SDE}
    \begin{aligned}
        S_{X}^{\mathrm{OM}}(\psi,\dot{\psi})=&\ \frac{1}{2}\int_0^T \left[ \frac{|\dot{\psi}_s - b(\psi_s)|^2}{\sigma^2} + \nabla\cdot b(\psi_s) \right]\d s.
\end{aligned}
\end{equation}}There are two main methodologies   to 
  derive  this Onsager--Machlup functional \eqref{eq:OM4SDE}: the path integral approach \cite{Kath1981path,Wio2013}
  and the change of measure via Girsanov theorem \cite{Durr1978, Ikeda1980}. 
  
  The path integral sums the likelihood over all possible paths, each   weighted by the OM action. The finite-dimensional distributions are first derived  in  this approach before taking the   limit   of  time discretization.  It expresses
  the transition probability  from $x_0$ at $t=0$ to $x_T$ at $t=T$ in the form of Feynman's path integral \cite{Kath1981path,Wio2013},  
{\begin{equation} \label{eq:PISDE}
    \begin{aligned}
        p_T(x_0,x_T)=&\ \lim_{n\to\infty}\int_{\prod_{i=1}^{n-1}\mathbb{R}^d}p_{t_{1}-t_{0}}(x_{0},x_{1})\cdots p_{t_{n}-t_{n-1}}(x_{n-1},x_{T})\d x_1\cdots\d x_{n-1}\\
        =&\ \int_{\psi\in C_{x_0,x_T}[0,T]}\exp\left(-S_X^{\mathrm{OM}}(\psi,\dot{\psi})\right)\mathcal{D}(\psi).
        \end{aligned}
\end{equation}
}Here $C_{x_0,x_T}[0,T]$ denotes the space of continuous paths with initial point $x_0$ and terminal point $x_T$, and $\mathcal{D}$ is the corresponding path measure on $C_{x_0,x_T}[0,T]$.

The second popular approach is to evaluate the probability that the stochastic process $X_t$ inside a $\delta$-width tube around a smooth path and then to work on the limit of vanishing $\delta$. The interpretation of the OM functional is that for   a sufficiently small $\delta>0$, there is a    constant $C(\delta,T)>0$ depending on $\delta$ and $T$, such that  
{\begin{equation} \label{eq:Pob}
\begin{split}
\mathbb{P}\left(\sup_{t\in[0,T]}|X_t-\psi_t|\leq\delta\right) \approx C(\delta,T)\exp(-S_{X}^{\mathrm{OM}}(\psi,\dot{\psi})),  
\end{split}
\end{equation}
}holds for any $\psi\in C^2([0,T],\mathbb R^d)$ satisfying $\psi_0=x_0$; refer to \cite{Durr1978, Ikeda1980}
To calculate the probability on the left-hand side of \eqref{eq:Pob}, it is necessary to apply the change of measure through the Girsanov theorem. The central point  is to
identify the correct exponent term as $\delta\downarrow 0$.



\subsection{ Onsager--Machlup functional for non-Gaussian systems}
Even though diffusion processes are widely used, many complex biological and physical systems are driven by noise of a non-Gaussian type. The corresponding OM functional for these systems is still unknown, though. There aren't many related works for the jump diffusion process, even when it comes to the solution's asymptotic behaviors and the most likely transition path.
\cite{bardina2002asymptotic} only addressed the asymptotic evaluation of the probability for a tube around the jump path for linear stochastic differential equations driven by a Poisson process. 
With symmetric $\alpha$-stable L\'{e}vy motion, \cite{huang2019characterization} described the most likely transition path of SDE in terms of a deterministic dynamical system, but left the question of the OM functional unresolved.

Applying the well-known path integral and Girsanov theorem to deriving the OM action functional for jump diffusion process presents a few fundamental difficulties.  
A typical jump-diffusion process's transition density in \eqref{eq:PISDE} becomes non-trivially complex for the path integral approach, failing to provide a tractable functional representation of paths as in the diffusion process. 
Accurately accounting for the jumps within the $\delta$-width tube in the limit of $\delta$ tending to 0 is essential for the second attempt of using change of measure via Girsanov theorem; however, the summation of such jumps in the tube is unbounded, which presents a significant challenge for the limit of vanishing $\delta$.  \cite{chao2019onsager} has attempted to explore this second approach to the one dimensional jump diffusion process by applying \cite{Durr1978} to it. Regretfully, this outcome does not ensure a full form of the corresponding OM functional since it does not address the subtle problem of unbounded number of jumps in the tube.

\subsection{Main Contributions}
The Onsager--Machlup functional holds a crucial role in the analysis of stochastic dynamical systems. As we have previously introduced, in the realm of Gaussian systems, this functional has been extensively and effectively employed across a wide range of disciplines. However, as highlighted in earlier sections, traditional approaches to deriving the Onsager--Machlup functional encounter substantial technical difficulties when applied to non-Gaussian contexts. In this paper, we tackle this significant theoretical gap by introducing a novel methodology. This innovative approach capitalizes on the newly established concept of probability flow equivalence between jump-diffusion and diffusion processes. By employing this method, we successfully derive the closed-form Onsager--Machlup functional for jump-diffusion processes characterized by finite jump activity. Furthermore, for jump-diffusions with infinite jump activity, our approach provides a discrete version of the corresponding Onsager--Machlup functional.

Let us introduce our results in more detail. Let $(\Omega,\mathcal{F},\mathbb{P})$ be a probability space equipped with a filtration $\{\mathcal{F}_t,t\geq0\}$ that satisfies the usual hypotheses, i.e., $\mathcal{F}_0$ contains all sets of $\mathbb{P}$-measure zero, and $\mathcal{F}_t=\mathcal{F}_{t+}$ where $\mathcal{F}_{t+}=\cap_{\varepsilon>0}\mathcal{F}_{t+\varepsilon}$.
Consider the following jump-diffusion stochastic differential equation with finite jump activity:
\begin{equation}\label{sdeBL:finitepoisson}
    \left\{\begin{aligned}
        \d X_t=&\ b(X_{t-})\d t + \sigma \d B_t +  J_t\d N_t,\quad t>0,\\
         X_0=&\ x_0\in\mathbb{R}^d,
    \end{aligned}\right.
\end{equation}
where $N$, being independent of the Brownian motion $B$, is an Poisson process with state-dependent stochastic intensity $\lambda(X_{t-})$ and jump size of 1.  The term  $J_t$ represents the random jump size with intensity $\nu_J$, which is independent of both $B$ and $N$. Our results, as presented in Theorem \ref{theo:PI}, demonstrate that the Onsager--Machlup functional for \eqref{sdeBL:finitepoisson} is given by
{\small\begin{equation}\label{OM:jump}
    \begin{split}
        S_{X}^{\mathrm{OM}}(\psi,\dot{\psi})=&\ \frac{1}{2} \int_{0}^{T} \Bigg[ \frac{1}{\sigma^2}\left|\dot{\psi}_s -  b(\psi_s) - \ell_J(\psi_s) \right|^2   +  \nabla\cdot\left( b(\psi_s) +  \ell_J(\psi_s)\right) + \tilde{\ell}_J(\psi_s) \Bigg]\d s ,
    \end{split}
\end{equation}
}where 
{\small\begin{equation}\label{lJ}
\begin{aligned}
    \ell_J(x):=&\ \int_{\mathbb{R}^d} z\int_0^1\frac{\lambda(x-\theta z)\nu_J( -\theta z)}{\nu_J(0)}\d\theta\ \nu_J(\d z),\\
    \tilde{\ell}_J(x):=&\ \int_{\mathbb{R}^d}z\cdot\int_0^1\lambda(x-\theta z)\frac{\nabla\nu_J(-\theta z)\nu_J(0)-\nu_J(-\theta z)\nabla\nu_J(0)}{(\nu_J(0))^2}\d\theta\ \nu_J(\d z).
\end{aligned}
\end{equation}}


Our second result is about the jump diffusion processes with infinite jump activity. Consider a scalar Markovian jump-diffusion process $X$ whose generator is given by:
{\small\begin{equation}\label{sdeBL2}
    \begin{aligned}
         \mathcal{L}f(x)=&\ b(x)f'(x) + \frac{\sigma^2}{2}f''(x) + \int_{\mathbb{R}\backslash\{0\}} \left( f(x + F(x,z) ) -  f(x) - 1_{\{|z|<1\}}F(x,z)f'(x) \right) \nu(x,z)\d z,
   \end{aligned} 
\end{equation}
}where the mappings $b:\mathbb{R}\to\mathbb{R}$ and $F:\mathbb{R}\times\mathbb{R}\to\mathbb{R}$ are all assumed to be measurable, $\sigma$ is a positive constant, $\nu: \mathbb{R} \times \mathbb{R} \backslash \{0\} \to (0, \infty)$ is a L\'{e}vy measure being absolutely continuous respect to the Lebesgue measure. For any $n\in\mathbb{N}$ and Borel sets $\{A_i\subset\mathbb{R}\}_{i=1}^n$, our result in {Theorem \ref{theo:infinite}} claims that,
{\small \begin{equation*} 
    \begin{aligned}
       & \mathbb{P}(X_{t_i}\in A_i,\ i=1,\cdots,n) \\
       = &\ \int_{\prod_{i=1}^{n-1}A_i}  \exp\left\{ - S_X^{\mathrm{dOM}}(\{x_i\}_{i=0}^n) + \mathcal{O}(\Delta t) \right\} \left(\frac{1}{\sqrt{(2\pi)\Delta t}}\right)^n\d x_1\cdots\d x_{n-1},
    \end{aligned}
\end{equation*}
}where the discrete version of Onsager--Machlup function reads
{\small \begin{equation}\label{dOM}
    \begin{aligned}
        & S_X^{\mathrm{dOM}}(\{x_i\}_{i=0}^n)=\ \sum_{i=1}^n   \frac{1}{2\sigma^2}\left| \frac{x_{i}-x_{i-1}}{\Delta t} - b(x_i) + \int_{|z|<1}F(x_i,z)\nu(x_i,\d z) \right. \\
        & \left. - \int_0^1\int_{\mathbb{R}\backslash\{0\}} F(\mathcal{T}^{-1}_{F,\theta,z}(x_i),z )|\mathcal{J}_{F,\theta,z}(x_i)|\frac{\nu_F(x_{i-1},\mathcal{T}^{-1}_{F,\theta,z}(x_i) -x_{i-1})}{\nu_F(x_{i-1},x_i-x_{i-1})} \right. \\
        & \left. \times \nu(\mathcal{T}^{-1}_{F,\theta,z}(x_i),\d z)\ \d\theta  \right|^2 \Delta t  + \sum_{i=1}^n\Delta t \frac{\d}{\d x}\left(b(x) - \int_{|z|<1}F(x,z)\nu(x,\d z)  \right. \\
        & \left.\left.  + \int_0^1\int_{\mathbb{R}\backslash\{0\}} F(\mathcal{T}^{-1}_{F,\theta,z}(x),z )|\mathcal{J}_{F,\theta,z}(x)|\frac{\nu_F(x_{i-1},\mathcal{T}^{-1}_{F,\theta,z}(x) -x_{i-1})}{\nu_F(x_{i-1},x-x_{i-1})} \nu(\mathcal{T}^{-1}_{F,\theta,z}(x),\d z)\ \d\theta \right)\bigg|_{x=x_i} ,\right.
    \end{aligned}
\end{equation}
}where
$\mathcal{T}_{F,\theta,z}: x\mapsto x + \theta F(x,z)$ is assumed to be invertible and $\mathcal{J}_{F,\theta,z}$ is the Jaccobian matrix of $\mathcal{T}^{-1}_{F,\theta,z}$, and $\nu_F(y,x-y):=-\frac{\partial}{\partial (x-y)}\left(\int_{\{z : F(y, z) \geq  x-y\}}\nu(y,z)\d z\right)$ for $x\neq y$.

Let us briefly discuss our primary results. Unlike the pure diffusion described by \eqref{SDE}, the additional terms $\ell_J$ and $\tilde{\ell}$ in \eqref{OM:jump} arise from jump noise $J_t \d N_t$. If the system is driven only by Brownian noise as in \eqref{sdeBL:finitepoisson}, the Onsager--Machlup functional \eqref{OM:jump} aligns with the standard one \eqref{eq:OM4SDE}. The influence of the jump noise is largely determined by the properties of the L\'{e}vy measure, especially its behaviour at zero. In systems with infinite jump activity, the L\'{e}vy measure's definition excludes zero, preventing a continuous functional form for \eqref{dOM}. However, for system \eqref{sdeBL2} with $ F(x, z) = z $ and $ \nu(x, z) = \lambda(x)\nu_J(z) $ (i.e., it turns to the finite jump activity case), the discrete OM functional \eqref{dOM} converges to the continuous form \eqref{OM:jump}.

The structure of the paper is as follows: In Section \ref{FPEequi}, we derive the L\'{e}vy--Fokker--Planck equations and establish the probability flow equivalence for jump-diffusion processes with finite jump activity. Based on this equivalence, Section \ref{OMderivation} presents the derivation of the Onsager--Machlup functional for these processes using path integrals and tube-probability estimates. In Section \ref{sec:infinite}, we extend our discussion to jump-diffusions with infinite jump activity. Finally, in Section \ref{discussion}, we summarise our results and discuss the contributions of this paper.

\section{Jump-Diffusion Processes with Finite Jump Activity}\label{FPEequi}

In this section, we consider the jump diffusion process with finite jump activity described in \eqref{sdeBL:finitepoisson}. Based on the corresponding L\'{e}vy--Fokker--Planck equation, we demonstrate the probability flow equivalence to a diffusion process. We make the following assumptions on the jump-diffusion SDE \eqref{sdeBL:finitepoisson}.
\begin{assumption}\label{ass:main}
    \hfill
    \begin{enumerate}
        \item The functions $\nu_J(\cdot)$, $b_i(\cdot)$ and $\lambda(\cdot)$ are infinitely differentiable almost everywhere in $\mathbb{R}^d$ for $i=1,\cdots,d$. And $\nu_J$ has a bounded support and $\nu_J(0)>0$.\label{subass:lambdanu}
        
        \item The jump-diffusion SDE  \eqref{sdeBL:finitepoisson} has a unique solution. The transition density $p_t(y,x)$ for the solution process  to move  from $y$ at time 0 to $x$ at time $t$, exists for all $x,y\in\mathbb{R}^d$ and all $t>0$.  In addition,  $p_t(y,x)$ is continuously differentiable with respect to $t$, twice continuously differentiable with respect to $x$ and $y$,  and  $p_t(y,x)>0$ for all $t>0$, $y, x\in\mathbb{R}^d$.\label{subass:existp}
        
        \item  For any $T>0$,  $\mathbb{P}\left( \sup_{0\le t\le T}|X_t|<\infty\right)=1$ .

    \end{enumerate}
\end{assumption}
There are some conditions for Assumption \ref{ass:main}-\eqref{subass:existp} to hold; for details, please refer to \cite[Theorem 2-29]{bichteler1987malliavin}.

\subsection{Preliminary: L\'{e}vy--Fokker--Planck equation}
The infinitesimal generator $\mathcal{L}$ associated with \eqref{sdeBL:finitepoisson} takes the form \cite{applebaum2009levy}:
{\begin{equation}\label{generator:poisson}
    \begin{aligned}
        \mathcal{L}f(x):=&\ \sum_{i=1}^d b_i(x)(\partial_if)(x) + \frac{\sigma^2}{2}\sum_{i,j=1}^d  (\partial_i\partial_jf)(x)  + \lambda(x)\int_{\mathbb{R}^d} (f(x+ z)-f(x))\nu_J(\d z),
    \end{aligned}
\end{equation}}for any   function $f: \mathbb{R}^d \to \mathbb{R}$ that possess bounded and continuous first and second derivatives. For every $t > 0$, $x \in \mathbb{R}^d$ and $z\in\mathbb{R}^d$, by Taylor's theorem 
{\small\begin{equation}\label{taylorexpansion:p}
    \begin{aligned}
         f(x+z)=&\ 
         f(x) + \sum_{i=1}^d z_i\int_0^1 (\partial_if)(x+\theta z)\d \theta,
    \end{aligned}
\end{equation}
}we can rewrite  \eqref{generator:poisson} as 
{\small \begin{equation*}
    \begin{aligned}
        \mathcal{L}f(x)=&\ \sum_{i=1}^d b_i(x)(\partial_if)(x) + \frac{\sigma^2}{2}\sum_{i,j=1}^d  (\partial_i\partial_jf)(x)  + \lambda(x)\int_{\mathbb{R}^d} \sum_{i=1}^d z_i\int_0^1 (\partial_if)(x+\theta z)\d \theta\ \nu_J(\d z).
    \end{aligned}
\end{equation*}}Then we obtain the expression of the adjoint operator of $\mathcal{L}$ using the integration by part as follows,
{\small \begin{equation*}
    \begin{aligned}
&\int_{\mathbb{R}^d}\mathcal{L}f (x)p_t(y,x)\d x 
         = \int_{\mathbb{R}^d}\left(\sum_{i=1}^d b_i(x)(\partial_if)(x) +  \frac{\sigma^2}{2}\sum_{i,j=1}^d (\partial_i\partial_jf)(x)\right)p_t(y,x)\d x \\
         &\ + \int_{\mathbb{R}^d}\int_{\mathbb{R}^d}\int_0^1 \sum_{i=1}^d z_i\lambda(x) (\partial_if)(x+\theta z)p_t(y,x)\d \theta\ \nu_J(\d z)\d x\\
         =&\ -\int_{\mathbb{R}^d}\left(\sum_{i=1}^d \partial_i \left( b_i(x) p_t(y,x) \right) - \frac{\sigma^2}{2}\sum_{i,j=1}^d \partial_i\partial_j p_t(y,x) \right)f(x)\d x \\
          &\ + \int_{\mathbb{R}^d}\int_{\mathbb{R}^d}\int_0^1 \sum_{i=1}^d z_i\lambda(x -\theta z) (\partial_if)(x)p_t(y,x -\theta z)\d \theta\ \nu_J(\d z)\d x\\
        =&\ -\int_{\mathbb{R}^d}\left(\sum_{i=1}^d \partial_i \left( b_i(x) p_t(y,x) \right) - \frac{\sigma^2}{2}\sum_{i,j=1}^d \partial_i\partial_j p_t(y,x) \right)f(x)\d x \\
        & - \int_{\mathbb{R}^d}\left(\int_{\mathbb{R}^d}\int_0^1 \sum_{i=1}^d z_i\partial_i \left(\lambda(x-\theta z) p_t(y,x-\theta z)\right)\d \theta\ \nu_J(\d z) \right)f(x) \d x  =:\ \int_{\mathbb{R}^d}f (x)\mathcal{L}^*p_t(y,x)\d x.
    \end{aligned}
\end{equation*}}

  The  L\'{e}vy--Fokker-Planck equation  describing the evolution of the density $p_t(y,x)$ of \eqref{sdeBL:finitepoisson} is the following partial differential equation $\partial_t p_t = \mathcal{L}^* p_t$ where $\mathcal{L}^*$ is for $x$ variable. Specifically, for any $x, y \in \mathbb{R}^d$ and $t > 0$, 
{\small \begin{equation}\label{FPE:poisson}
    \begin{aligned}
         \frac{\partial p_t(y,x)}{\partial t} 
        =&\ - \sum_{i=1}^d \partial_i \left( b_i(x) p_t(y,x) \right) +  \frac{\sigma^2}{2}\sum_{i,j=1}^d \partial_i\partial_j p_t(y,x) \\
        &\ -  \int_{\mathbb{R}^d}\int_0^1 \sum_{i=1}^d z_i\partial_i \left(\lambda(x-\theta z) p_t(y,x-\theta z)\right) \d \theta\ \nu_J(\d z).
    \end{aligned}
\end{equation}
}with the initial $p_0(y,x)=\delta_y(x)$. Here
 $\partial_i$ and $\partial_j$ represent $\frac{\partial}{\partial x_i}$ and $\frac{\partial}{\partial x_j}$, respectively.

 \begin{remark}
      Depending the order in   Taylor's series \eqref{taylorexpansion:p},  the form of the L\'{e}vy--Fokker-Planck equation \eqref{FPE:poisson} may have other representations \cite{sun2012fokker}.   Here, we select the zero-order expansion purely for simplicity, without the need to incorporate higher-order derivatives of the density $p_t$.
 \end{remark}

\subsection{Preliminary:  Probability flow of ODE and SDE}

Probability flow refers to the movement or evolution of probability distributions over time and is rooted in optimal transport. It can be understood as a path or curve in the space of probability distribution. 
Recently, there have been many numerical works that use the ODE as the flow map to realise probability flow for optimal transport, the Fokker-Planck equation, and generative artificial intelligence \cite{boffi2023probability,maoutsa2020interacting,song2020score, cao2024exploring, yoon2023score}. 

The idea of the probability flow is quite simple.  For the diffusion process, 
one rewrites the Fokker-Planck equation   as a continuity equation for a deterministic ODE flow map whose velocity field involves the probability distributions of the diffusion process.
This  establishes  the equivalence of probability flows  between the deterministic and stochastic dynamics. Specifically, denote the probability density of the solution for the It\^{o} diffusion SDE \eqref{SDE}    as $p_t(x)$ at time $t$, the corresponding Fokker--Planck equation can be written as
{\small\begin{equation*}
    \frac{\partial p_t(x)}{\partial t} 
        =\ - \sum_{i=1}^d \partial_i \left[\left( b_i(x)  -  \frac{\sigma^2}{2}\sum_{j=1}^d \  \partial_j( \log p_t(x) )  \right)p_t(x)\right]=:
        - \sum_{i=1}^d \partial_i  \left( \tilde{b}_{i}(x,t)  )p_t(x)\right),
\end{equation*}
}which has the form of a continuity equation and the vector field is $$\tilde{b}(x,t):=b(x) - \frac{\sigma^2}{2}\nabla\log p_t(x).$$ 
Then  at any time instant $t$, the solution process $X_t$ to \eqref{SDE} has the same (marginal) distribution with the solution process $\tilde{X}_t$ to the ODE system
$
    \frac{\d \tilde{X}_t}{\d t}=\tilde{b}(\tilde{X}_t),
$
if  $\tilde{X}_0$ and  $X_0$ have the same initial distribution. 
 The additional term $\nabla\log p_t(x)$ is known as the {\it score function}.  

Inspired by the probability flow in ODEs and It\^{o} diffusion SDEs, our strategy for the L\'{e}vy--Fokker--Planck equation \eqref{FPE:poisson} of the jump-diffusion process \eqref{sdeBL:finitepoisson} involves a few key steps. We convert the non-local integral term into part of the drift term. At the same time, we retain the Laplacian operator term. This transformation gives the L\'{e}vy--Fokker--Planck equation the form of a general Fokker--Planck equation for a diffusion process. Consequently, the probability flow equivalence between a jump-diffusion and a pure diffusion can be established.



\subsection{Probability flow equivalence between pure diffusion and jump-diffusion}
Formally, for the L\'{e}vy--Fokker--Planck equation \eqref{FPE:poisson}, we rewrite it as   
{\small \begin{equation*} 
    \begin{aligned}
        \frac{\partial p_t(y,x)}{\partial t} 
        =&\  - \sum_{i=1}^d \partial_i \left[\left( \underbrace{ b_i(x)  +  \int_{\mathbb{R}^d}\int_0^1  z_i \lambda(x-\theta z) \frac{ p_t(y,x-\theta z)}{p_t(y,x)} \d \theta\ \nu_J(\d z) }_{\mbox{``new'' drift}}  \right)p_t(y,x)\right]\\
        &\ + \frac{\sigma^2}{2}\sum_{i,j=1}^d \partial_i\partial_j p_t(y,x),
    \end{aligned}
\end{equation*}}so that it is in the form of  the Fokker-Planck equation for the diffusion SDE
\eqref{SDE} with the ``new'' drift term
and the same diffusion term $\sigma$.
 and the initial position at $y$. 
 However, to rigorously justify the meaning of this new diffusion ODE, one needs to deal with the singular ``new'' drift term at $t=0$, since $p_0(y,x)=\delta_{y}(x)$.

\begin{theorem}\label{theo:finite}
Assume that Assumption \ref{ass:main} holds and let $p_t(y,x)$ denote the solution of the L\'{e}vy--Fokker--Planck equation \eqref{FPE:poisson} with the initial  $p_0(y,x)=\delta_{y}(x)$. 
Let $\mathsf{N}_{x_0,\varepsilon} (\d x)$ denote the Gaussian distribution on $\mathbb{R}^d$, with the mean $x_0\in \mathbb{R}^d$ and the covariance matrix $\varepsilon I$, and $I$ is the $d\times d$ identity matrix.
Define the mollified probability distribution by the Gaussian kernel $\mathsf{N}_{x_0,\varepsilon}$
\begin{equation} \label{417}
p^{x_0,\varepsilon}_t(x):=\int_{\mathbb{R}^d}p_t(y,x)\, \mathsf{N}_{x_0,\varepsilon} (\d y).
\end{equation}
Then for any $\varepsilon>0$, and any $x_0\in \mathbb{R}^d$, the following It\^{o} stochastic differential equation is well-posed:
{\small \begin{equation}\label{sdeB:poissonvar}
       \left\{\begin{aligned}
            \d \widehat{X}^{x_0,\varepsilon}_t=&\ \left( b(\widehat{X}^{x_0,\varepsilon}_t)  +  \int_{\mathbb{R}^d}\int_0^1  z \lambda(\widehat{X}^{x_0,\varepsilon}_t-\theta z) \frac{ p^{x_0,\varepsilon}_t(\widehat{X}^{x_0,\varepsilon}_t-\theta z)}{p^{x_0,\varepsilon}_t(\widehat{X}^{x_0,\varepsilon}_t)} \d \theta\ \nu_J(\d z) \right)\d t  + \sigma \d B_t,\quad t>0,\\
            \widehat{X}^{x_0,\varepsilon}_0\sim &\ \mathsf{N}_{x_0,\varepsilon},
       \end{aligned}\right.
    \end{equation}}
and for any $A\in\mathcal{B}(\mathbb{R}^d)$ and $t\geq0$, 
    \begin{equation}
    \label{430}
        \mathbb{P}(X_t\in A| X_0=x_0)=\ \lim_{\varepsilon\to0}\mathbb{P}(\widehat{X}^{x_0,\varepsilon}_t\in A|\widehat{X}^{x_0,\varepsilon}_0=x_0).
    \end{equation}
\end{theorem}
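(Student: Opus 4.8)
The plan is to split the claim into three parts: well-posedness of the auxiliary diffusion \eqref{sdeB:poissonvar} for each fixed $\varepsilon>0$; identification of the time-$t$ law of $\widehat X^{x_0,\varepsilon}_t$, started from $\widehat X^{x_0,\varepsilon}_0\sim\mathsf N_{x_0,\varepsilon}$, with the mollified density $p^{x_0,\varepsilon}_t$ of \eqref{417}; and the limit $\varepsilon\downarrow0$. \emph{Well-posedness.} The key point is that mollifying by the Gaussian kernel removes the $t=0$ singularity noted before the theorem: since $p_0(y,\cdot)=\delta_y$, one has $p^{x_0,\varepsilon}_0=\mathsf N_{x_0,\varepsilon}$, which is smooth and strictly positive, and by Assumption \ref{ass:main}-\eqref{subass:existp} together with the linearity of \eqref{FPE:poisson} in $p$, the function $p^{x_0,\varepsilon}_t(x)=\int_{\mathbb R^d}p_t(y,x)\,\mathsf N_{x_0,\varepsilon}(\d y)$ is $C^1$ in $t$, $C^2$ in $x$, and strictly positive on all of $[0,T]\times\mathbb R^d$. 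Since $\nu_J$ has bounded support (Assumption \ref{ass:main}-\eqref{subass:lambdanu}), the ``new'' drift
\[
\widehat b(x,t):=b(x)+\int_{\mathbb R^d}\int_0^1 z\,\lambda(x-\theta z)\,\frac{p^{x_0,\varepsilon}_t(x-\theta z)}{p^{x_0,\varepsilon}_t(x)}\,\d\theta\,\nu_J(\d z)
\]
is continuous on $[0,T]\times\mathbb R^d$ and locally Lipschitz in $x$ uniformly on compact time intervals, which gives local strong existence and pathwise uniqueness for \eqref{sdeB:poissonvar}; non-explosion then follows from a Lyapunov estimate, using that the sub-exponential tails of $p^{x_0,\varepsilon}_t$ inherited from the bounded jump support keep the ratio $p^{x_0,\varepsilon}_t(x-\theta z)/p^{x_0,\varepsilon}_t(x)$ polynomially bounded in $|x|$, and is also consistent with the non-explosion assumption in Assumption \ref{ass:main} and the mass conservation of \eqref{FPE:poisson}.

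\emph{Probability-flow identity.} Just as in the computation preceding the theorem, for any smooth strictly positive density $\rho$ one can rewrite the L\'{e}vy--Fokker--Planck operator in continuity form, $\mathcal L^*\rho=-\nabla\cdot\big(\widehat b[\rho]\,\rho\big)+\frac{\sigma^2}{2}\Delta\rho$ with $\widehat b[\rho](x):=b(x)+\int_{\mathbb R^d}\int_0^1 z\,\lambda(x-\theta z)\,\rho(x-\theta z)/\rho(x)\,\d\theta\,\nu_J(\d z)$, because the nonlocal term of $\mathcal L^*\rho$ equals $\nabla\cdot\!\big(\rho(x)\int_{\mathbb R^d}\int_0^1 z\,\lambda(x-\theta z)\,\rho(x-\theta z)/\rho(x)\,\d\theta\,\nu_J(\d z)\big)$. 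Since $p^{x_0,\varepsilon}_t$ solves $\partial_t p=\mathcal L^* p$ with initial datum $\mathsf N_{x_0,\varepsilon}$, it therefore solves the Fokker--Planck equation of \eqref{sdeB:poissonvar}, whose drift coefficient is the now-fixed function $\widehat b=\widehat b[p^{x_0,\varepsilon}]$. On the other hand, It\^{o}'s formula applied to \eqref{sdeB:poissonvar} shows that the law $q_t$ of $\widehat X^{x_0,\varepsilon}_t$ solves the same Fokker--Planck equation with the same initial datum; by uniqueness for this linear parabolic Cauchy problem in the class of smooth densities of total mass one (available thanks to the regularity and positivity established above), $q_t=p^{x_0,\varepsilon}_t$ for every $t\ge0$.

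\emph{Limit $\varepsilon\downarrow0$.} Combining $q_t=p^{x_0,\varepsilon}_t$ with Fubini,
\[
\mathbb P\big(\widehat X^{x_0,\varepsilon}_t\in A\big)=\int_A p^{x_0,\varepsilon}_t(x)\,\d x=\int_{\mathbb R^d}\Phi_A(y)\,\mathsf N_{x_0,\varepsilon}(\d y),\qquad \Phi_A(y):=\int_A p_t(y,x)\,\d x=\mathbb P(X_t\in A\mid X_0=y).
\]
Here $0\le\Phi_A\le1$, and $\Phi_A$ is continuous in $y$, by the continuity of $p_t(\cdot,x)$ from Assumption \ref{ass:main}-\eqref{subass:existp} together with dominated convergence (the dominating function coming from local uniform bounds on $p_t(y,\cdot)$ and from $\int_{\mathbb R^d}p_t(y,x)\,\d x=1$). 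Since $\mathsf N_{x_0,\varepsilon}\Rightarrow\delta_{x_0}$ weakly as $\varepsilon\downarrow0$, the right-hand side converges to $\Phi_A(x_0)=\mathbb P(X_t\in A\mid X_0=x_0)$, which is \eqref{430}.

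The step I expect to be the main obstacle is the non-explosion of \eqref{sdeB:poissonvar}. The mollification makes the new drift finite and smooth down to $t=0$, but controlling the growth at infinity of the density ratio $p^{x_0,\varepsilon}_t(x-\theta z)/p^{x_0,\varepsilon}_t(x)$ and turning it into a usable one-sided (Lyapunov) growth bound — or, alternatively, transferring the non-explosion of the original jump-diffusion \eqref{sdeBL:finitepoisson} along the probability-flow correspondence — is the delicate ingredient. By contrast, the continuity-equation rewriting and the Fokker--Planck uniqueness argument behind the probability-flow identity are conceptually the core of the theorem but routine once this regularity and these tail bounds are secured.
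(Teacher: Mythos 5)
Your overall architecture (rewrite the L\'{e}vy--Fokker--Planck equation in continuity form, identify the law of \eqref{sdeB:poissonvar} with $p^{x_0,\varepsilon}_t$ by Fokker--Planck uniqueness, then send $\varepsilon\downarrow0$) parallels the paper's, but two of the three steps have genuine gaps. The most important one is in the limit step: what you compute via Fubini is the \emph{unconditional} probability $\mathbb P(\widehat X^{x_0,\varepsilon}_t\in A)$ for the process started from the Gaussian law $\mathsf N_{x_0,\varepsilon}$, and you show this converges to $\mathbb P(X_t\in A\mid X_0=x_0)$. But \eqref{430} asserts convergence of the \emph{conditional} probability $\mathbb P(\widehat X^{x_0,\varepsilon}_t\in A\mid \widehat X^{x_0,\varepsilon}_0=x_0)$, i.e.\ of the transition kernel $\widehat p^{\varepsilon}_t(x_0,\cdot)$ of the diffusion \eqref{sdeB:poissonvar} started at the single point $x_0$ (its drift still being built from the mollified density $p^{x_0,\varepsilon}$). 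The probability-flow identity only pins down the $\mathsf N_{x_0,\varepsilon}$-averaged marginal $\int_{\mathbb R^d}\widehat p^{\varepsilon}_t(y,A)\,\mathsf N_{x_0,\varepsilon}(\d y)=\int_A p^{x_0,\varepsilon}_t(x)\,\d x$; passing from this average to the value at $y=x_0$ requires some $\varepsilon$-uniform control of $y\mapsto\widehat p^{\varepsilon}_t(y,A)$ (the kernel family changes with $\varepsilon$), or the paper's route of comparing $\widehat X^{x_0,\varepsilon}$ with the jump-diffusion $X^{x_0,\varepsilon}$ having the same Gaussian initial law and invoking weak convergence of $X^{\varepsilon}$ to $X$ as $\mathsf N_{x_0,\varepsilon}\Rightarrow\delta_{x_0}$. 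As written, the conditional statement in \eqref{430} is not established.

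The second gap is the well-posedness/non-explosion step, which you yourself flag as the main obstacle but whose sketch does not go through as stated. The claimed polynomial bound on the ratio $p^{x_0,\varepsilon}_t(x-\theta z)/p^{x_0,\varepsilon}_t(x)$ fails near $t=0$: there $p^{x_0,\varepsilon}_0=\mathsf N_{x_0,\varepsilon}$ exactly, and the Gaussian ratio equals $\exp\bigl\{\bigl(2\theta z\cdot(x-x_0)-\theta^2|z|^2\bigr)/(2\varepsilon)\bigr\}$, which grows exponentially in $|x|$ along directions with $z\cdot(x-x_0)>0$; so the proposed Lyapunov estimate is not secured and the local-Lipschitz-plus-Lyapunov route needs a genuinely different ingredient. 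The paper avoids both Lipschitz and growth conditions: it observes that $|\widehat b^{x_0,\varepsilon}|\in L^{q_n}_{\mathrm{loc}}(\mathbb R_+;L^{p_n}(B_n(0)))$ with $d/p_n+2/q_n<1$ and invokes Krylov--R\"ockner-type well-posedness for singular drifts (\cite[Theorem 1.3]{zhang2011stochastic}) up to the explosion time, and then rules out explosion by noting that $p^{x_0,\varepsilon}$, a globally defined probability density, is the unique solution of the associated Fokker--Planck equation \eqref{FPE:var}. Your middle step (the continuity-form rewriting and the identification of the law of $\widehat X^{x_0,\varepsilon}_t$ with $p^{x_0,\varepsilon}_t$ via uniqueness for the linear Fokker--Planck problem) is essentially the paper's argument and is fine once the regularity is in place; it is the two surrounding steps that need repair.
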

\begin{proof}
Note that $p^{x_0,\varepsilon}_t $ is the distribution of the solution process $X^{x_0,\varepsilon}_t$ to the   jump-diffusion SDE \eqref{sdeBL:finitepoisson} with the non-singular  initial Gaussian distribution $\mathsf{N}_{x_0,\varepsilon}$:
\begin{equation}\label{sdeBL:finitepoissonvvar}
   \left\{\begin{aligned}
        \d X^{x_0,\varepsilon}_t=&\ b(X^{x_0,\varepsilon}_{t-})\d t + \sigma \d B_t +  J_t\d N_t,\quad t>0,\\
         X^{x_0,\varepsilon}_0\sim&\ \mathsf{N}_{x_0,\varepsilon}.
   \end{aligned}\right.
\end{equation}
  The   L\'{e}vy--Fokker--Planck equation of \eqref{sdeBL:finitepoissonvvar} reads
  \begin{equation}\label{FPE:var}
    \begin{aligned}
        \frac{\partial p^{x_0,\varepsilon}_t(x)}{\partial t} 
        =&\  - \sum_{i=1}^d \partial_i  \left(   \widehat{b}_i(x,t)   p^{x_0,\varepsilon}_t(x) \right) \ + \frac{\sigma^2}{2}\sum_{i,j=1}^d \partial_i\partial_j p^{x_0,\varepsilon}_t(x),
    \end{aligned}
\end{equation}
where the new drift term
\begin{equation} \label{eqn:bh}
     \widehat{b}_i(x,t) := b_i(x)  +  \int_{\mathbb{R}^d}\int_0^1  z_i \lambda(x-\theta z) \frac{ p^{x_0,\varepsilon}_t(x-\theta z)}{p^{x_0,\varepsilon}_t(x)} \d \theta\ \nu_J(\d z) 
\end{equation}
is well defined for all $t\ge 0$. 

Equation \eqref{FPE:var}   shares the same Fokker--Planck equation of the It\^{o} diffusion SDE \eqref{sdeB:poissonvar}.
Under Assumption \ref{ass:main}, it is known that, $|\widehat{b}^{x_0,\varepsilon}|\in L_{\mathrm{loc}}^{q_n}(\mathbb{R}_+;L^{p_n}(B_n(0)))$ for some $p_n,q_n$ satisfying $\frac{d}{p_n} + \frac{2}{q_n}<1$, where $B_n(0)$ is the closed ball in $\mathbb{R}^d$, centered at 0 with radius $n$. From this, it follows that there exists a unique strong solution to SDE \eqref{sdeB:poissonvar} for any initial point $y\in\mathbb{R}^d$, i.e.,
{\small  \begin{equation*}
    \begin{aligned}
        \widehat{X}^{x_0,\varepsilon}_t=&\ y  + \sigma B_t + \int_0^t \left( b(\widehat{X}^{x_0,\varepsilon}_t) +  \int_{\mathbb{R}^d}\int_0^1  z \lambda(\widehat{X}^{x_0,\varepsilon}_s-\theta z) \frac{ p^{x_0,\varepsilon}_s(\widehat{X}^{x_0,\varepsilon}_s-\theta z)}{p^{x_0,\varepsilon}_s(\widehat{X}^{x_0,\varepsilon}_s)} \d \theta\ \nu_J(\d z) \right)\d s,
    \end{aligned}
\end{equation*}}up to the explosion time, as established by \cite[Theorem 1.3]{zhang2011stochastic}. It is obvious that, $p^{x_0,\varepsilon}$ is the solution to the L\'{e}vy--Fokker--Planck equation \eqref{FPE:var}, and by the uniqueness of solution to \eqref{sdeB:poissonvar}, $p^{x_0,\varepsilon}$ is also the unique solution to   \eqref{FPE:var} on $\mathbb{R}^d\times[0,\infty)$. Thus the explosion time of $\widehat{X}^{x_0,\varepsilon}$ is infinite for  all $y\in\mathbb{R}^d$ almost surely under Assumption \ref{ass:main}. The first statement is proved.

The second statement is obvious by noting that, the normal distribution $ \mu_{x_0}^\varepsilon$ converges weakly to the Delta distribution $\delta_{x_0}(x)$. It is known  that $X^\varepsilon$ of \eqref{sdeBL:finitepoissonvvar} converges weakly to $X$ of 
\eqref{sdeBL:finitepoisson} on any bounded time intervals \cite[Theorem 7.1 \& 7.2]{billingsley2013convergence}. And from above argument we know that $\widehat{X}^{x_0,\varepsilon}$ shares all the marginal distributions with $X^\varepsilon$, thus
\begin{equation*}
    \lim_{\varepsilon\to0}\widehat{p}_t^\varepsilon(x_0,x)=p_t(x_0,x),\quad \forall t>0,\ x\in\mathbb{R}^d,
\end{equation*}
where $\widehat{p}^\varepsilon_t(x_0,x)$ denotes the transition density of \eqref{sdeB:poissonvar}.
\end{proof}
\begin{remark}

     It should be noted that our argument for probability flow equivalence means that the jump-diffusion process in \eqref{sdeBL:finitepoissonvvar} and the diffusion process in \eqref{sdeB:poissonvar}   share the same  {marginal} distributions at any given $t$ only if they also share the \emph{same} initial distributions. Therefore, if one wants to apply this probability flow equivalence to the transition kernel, from an arbitrary time $s$ to $t$, of the jump-diffusion process \eqref{sdeBL:finitepoisson}   by using  the diffusion process \eqref{sdeB:poissonvar}, it is necessary to construct the same distribution at $s$ for  the  diffusion process \eqref{sdeB:poissonvar} during this interval $[s,t]$.

\end{remark}

At the end of this subsection, we present  a short-time estimate of the transition kernel  $p_t(y,x)$, the solution of the L\'{e}vy--Fokker--Planck equation \eqref{FPE:poisson} with the initial $p_0(y,x)=\delta_{y}(x)$. This short-time analysis is crucial for deriving the Onsager--Machlup functional for jump-diffusion processes in a subsequent section. We present the following statement regarding this estimate.
\begin{lemma}\label{lem:tp}(\cite[Condition 1, 2 \& Theorem 1, 2]{yu2007closed})
    Under Assumption \ref{ass:main}, we have 
    \begin{equation}\label{expansion:p}
        p_t(y,x)=\ t^{-d/2}\exp\left(-\frac{C^{(-1)}(y,x)}{t}\right)\sum_{k=0}^\infty C^{(k)}(y,x)t^k + \sum_{k=1}^\infty D^{(k)}(y,x)t^k,
    \end{equation}
    for some functions $\{C^{(k)}\}_{k=-1}^\infty$ and $\{D^{(k)}\}_{k=1}^\infty$, and in particular,
    {\small \begin{equation*}
        \begin{aligned}
            C^{(-1)}(y,x)\geq&\ 0,\quad \forall x,y\in\mathbb{R}^d;\
            C^{(-1)}(y,x)=\ 0,\quad \mbox{if and only if}\quad x=y,\\
            C^{(0)}(x,x)=&\ \frac{1}{((2\pi)^{d}\det \Sigma(x))^{\frac{1}{2}}},\ 
            D^{(1)}(y,x)=\  \lambda(y)\nu_J(x-y).
        \end{aligned}
    \end{equation*}}
    And thus
    \begin{equation*}
        \begin{aligned}
            \lim_{t\downarrow0}\frac{p_t(y,x)}{t}=&\ \lambda(y)\nu_J(x-y),\quad x\neq y;\quad
            \lim_{t\downarrow0}\frac{p_t(x,x)}{t^{-\frac{d}{2}}}=&\ \frac{1}{((2\pi)^{d}\det \Sigma(x))^{\frac{1}{2}}}.
        \end{aligned}
    \end{equation*}
\end{lemma}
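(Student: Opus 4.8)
\textbf{Proof proposal for Lemma~\ref{lem:tp}.}
Since the statement is quoted from \cite{yu2007closed}, the plan is twofold: recall the parametrix argument that produces an expansion of the form \eqref{expansion:p}, and check that Assumption~\ref{ass:main} supplies the hypotheses (Conditions 1--2 there). The starting point is to split the generator \eqref{generator:poisson} as $\mathcal{L}=\mathcal{L}_0+\mathcal{L}_1$, where $\mathcal{L}_0 f=b\cdot\nabla f+\frac{\sigma^2}{2}\Delta f$ generates the pure diffusion \eqref{SDE} and $\mathcal{L}_1 f(x)=\lambda(x)\int_{\mathbb{R}^d}\bigl(f(x+z)-f(x)\bigr)\,\nu_J(\d z)$ is the jump part. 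Because $\nu_J$ is a finite measure with bounded support and, on any compact set, $\lambda$ is bounded, $\mathcal{L}_1$ is a bounded perturbation of $\mathcal{L}_0$ there, and the Duhamel (Dyson) series
\[
   P_t=\sum_{n=0}^{\infty}\int_{0<s_1<\dots<s_n<t}P^0_{t-s_n}\,\mathcal{L}_1\,P^0_{s_n-s_{n-1}}\,\mathcal{L}_1\cdots\mathcal{L}_1\,P^0_{s_1}\,\d s_1\cdots\d s_n ,\qquad P^0_t:=e^{t\mathcal{L}_0},
\]
converges, with the $n$-th term of order $(\lambda_{\max}t)^n/n!$ on compacts because of the simplex volume $t^n/n!$ and the sub-Markovian bound for $P^0$. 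Passing to transition kernels gives $p_t(y,x)=\sum_{n\ge0}p_t^{(n)}(y,x)$, where $p_t^{(0)}=p_t^0$ is the diffusion kernel and $p_t^{(n)}$ is the explicit $n$-fold space--time integral built from $p^0$, $\lambda$ and the density $\nu_J$.

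For the leading term $p_t^{(0)}$ one inserts the classical Varadhan--parametrix short-time expansion, which under Assumption~\ref{ass:main}-\eqref{subass:lambdanu} and constant non-degenerate $\sigma$ reads $p_t^0(u,v)=t^{-d/2}e^{-|v-u|^2/(2\sigma^2 t)}\sum_{k\ge0}a_k(u,v)t^k$, the drift $b$ entering only the $a_k$ through a smooth Cameron--Martin/Girsanov factor and not the exponent. This reproduces exactly the singular part $t^{-d/2}e^{-C^{(-1)}(y,x)/t}\sum_k C^{(k)}(y,x)t^k$ of \eqref{expansion:p}, with $C^{(-1)}(y,x)=|x-y|^2/(2\sigma^2)\ge0$ (vanishing iff $x=y$) and $C^{(0)}(x,x)=a_0(x,x)=\bigl((2\pi)^d\det\Sigma(x)\bigr)^{-1/2}$, $\Sigma\equiv\sigma^2 I$. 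For $n\ge1$ one evaluates the iterated integrals by Laplace's method: each interpolating heat kernel pins its endpoints together as its time argument tends to $0$, and each of the $n$ time integrations contributes a factor of order $t$, so for fixed $x\ne y$ each $p_t^{(n)}(y,x)$ is regular (no $t^{-d/2}$, no exponential prefactor, since the Gaussians integrated against the absolutely continuous $\nu_J$ produce smooth functions) and of order $t^n$; the diagonal-concentrated remainders are reabsorbed into the higher coefficients $C^{(k)}$, $k\ge1$. Computing the $n=1$ term to leading order --- one jump, at a time uniformly distributed on $[0,t]$, occurring near $y$ and of size $\approx x-y$ --- gives precisely $D^{(1)}(y,x)=\lambda(y)\nu_J(x-y)$, and the stated limits of $p_t(y,x)/t$ (for $x\ne y$) and $p_t(x,x)/t^{-d/2}$ follow at once.

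The main obstacle is the uniform bookkeeping of the $n\ge1$ terms: one must justify exchanging the sum over $n$ with the $t\downarrow0$ asymptotics and control the Laplace expansions of the multidimensional space--time integrals uniformly in $n$, so that the collected errors genuinely assemble into convergent series $\sum_k C^{(k)}t^k$ and $\sum_k D^{(k)}t^k$ with locally analytic coefficients. This is where finite jump activity is indispensable --- it is the bound $(\lambda_{\max}t)^n/n!$ that tames the series --- and it explains why no such closed form is available in the infinite-activity setting of Section~\ref{sec:infinite}. All of this analysis is carried out in \cite[Conditions 1--2 and Theorems 1--2]{yu2007closed}; the only remaining task is the routine verification that Assumption~\ref{ass:main} (almost-everywhere smoothness and bounded support of $\nu_J$ with $\nu_J(0)>0$, existence and regularity of $p_t$, and non-explosion) implies the structural and non-degeneracy conditions required there.
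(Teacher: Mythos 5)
Your proposal is correct and matches the paper's treatment: the paper offers no independent proof of Lemma~\ref{lem:tp}, but simply quotes it from \cite[Conditions 1, 2 \& Theorems 1, 2]{yu2007closed}, exactly as you ultimately do. Your additional Duhamel/parametrix sketch (bounded jump perturbation of the diffusion generator, Varadhan-type expansion for the Gaussian part, one-jump term giving $\lambda(y)\nu_J(x-y)$) is a reasonable account of where the expansion comes from, but the heavy lifting — and the verification that Assumption~\ref{ass:main} meets the cited conditions — is deferred to the reference in both your write-up and the paper itself.
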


The first limit equation for $x\ne y$  can be intuitively understood as follows: when $t$ is small and
$x\neq y$, the dominant probability is attributed to a solitary jump from $y$ to $x$, governed by the  L\'{e}vy intensity  $\lambda(y)\nu_J(x-y)$ \cite{figueroa2014small,figueroa2018small,tran2023lamn}. And the second limit equation corresponds to the fact that $p_0(x,x)=\delta_{x}(x)=\infty$.

\begin{remark}
     By Lemma \ref{lem:tp}, we know that,
\begin{equation*}
    \begin{aligned}
        \lim_{t\downarrow0}\frac{p^{x_0,\varepsilon}_t(x)}{t}=\lim_{t\downarrow0}\frac{\int_{\mathbb{R}^d}p_t(y,x)\mathsf{N}_{x_0,\varepsilon}(y)\d y}{t}= \int_{\mathbb{R}^d}\lambda(y)\nu_J(x-y)\mathsf{N}_{x_0,\varepsilon}(y)\d y.
    \end{aligned}
\end{equation*}
This limit avoids the discussion of $x=y$ since it is a null set in terms of the Lebesgue measure. 
\end{remark}

In addition to the short-time estimate of the transition density, we also require a short-time estimate for its first-order derivative in a later section. For simplicity, we assume that:
\begin{assumption}\label{ass:uniform}
    Assume that, for every $t>0$ the sum of the first $n$ terms of the sequence \eqref{expansion:p} converges uniformly as $n \to \infty$.
\end{assumption}

\section{Derivation of the Onsager--Machlup Functional For Jump-Diffusions with Finite Jump Activity}\label{OMderivation}

In this section, we aim to estimate the infinitesimal tube-probability around a smooth path for the jump-diffusion \eqref{sdeBL:finitepoisson}.


\subsection{Preliminary: path space and measures induced by jump-diffusions}
In what follows, the jump-diffusions \eqref{sdeBL:finitepoisson} is discussed on a given time interval $[0,T]$. Recall that the solution process of a jump-diffusion model resides in the space of c\`{a}dl\`{a}g functions, which is denoted by
\begin{equation*}
    \mathbb{D}[0,T]:=\{f:[0,T]\to\mathbb{R}^d |\ f\ \mbox{is right continuous with left limit} \}.
\end{equation*}
Let $\mathbb{D}_{x_0}[0,T]$ denote the space of c\`{a}dl\`{a}g functions starting from point $x_0$, equipped with the uniform norm $\|\cdot\|$: $\|\psi\|=\sup_{t\in[0,T]}|\psi(t)|, \forall \psi\in \mathbb{D}_{x_0}[0,T].$
The $\sigma$-field generated by the uniform norm is equivalent to the projection $\sigma$-field formed by the finite-dimensional projections, as stated in \cite[Lemma 2.1]{chao2019onsager} and \cite[Page 87-90]{pollard2012convergence}. Here, a finite-dimensional projection $\pi_S$ is a mapping $\pi_S: \mathbb{D}_{x_0}[0,T] \to \mathbb{R}^{|S|}$ that sends $\psi$ to $\{\psi_{t_i} : t_i \in S \}$, where $S$ is a finite subset of $[0,T]$ and $|S|$ represents the number of elements in $S$. The solution process $X$ to \eqref{sdeBL:finitepoisson}  induces a measure on space $\mathbb{D}_{x_0}[0,T]$:
\begin{equation*}
    \mu_X(A)=\ \mathbb{P}(X\in A),\quad \forall A\in\mathcal{B}(\mathbb{D}_{x_0}[0,T]),
\end{equation*}
where $\mathcal{B}(\mathbb{D}_{x_0}[0,T])$ denotes the projection $\sigma$-field. For our purpose here, we focus on the closed tube sets in $\mathbb{D}_{x_0}[0,T]$, around a path $\psi$ with radius $\delta>0$, defined as
{\small \begin{equation*}
    \begin{aligned}
        K(\psi,\delta):=&\{\varphi\in \mathbb{D}_{x_0}[0,T]\mid\ \|\varphi-\psi\|\leq\delta \} =\left\{\varphi\in \mathbb{D}_{x_0}[0,T]\bigg|\ \sup_{t\in[0,T]\cap\mathbb{Q}}|\varphi(t)-\psi(t) |\leq\delta \right\},
    \end{aligned}
    \end{equation*}}where $\mathbb{Q}$ denotes all the rational points in $\mathbb{R}$. Following a similar approach to that described in \cite[Lemma 2.3]{huang2023most}, we can approximate any closed tube using cylinder sets as the following  lemma shows.
\begin{lemma}\label{cylinder}
For any $\psi\in\mathbb{D}_{x_0}[0,T]$ and $\delta>0$, there exist cylinder sets $\{I_n\}_{n=1}^\infty$ such that
{\small \begin{equation*}
    \begin{aligned}
        \mu_X(K(\psi,\delta))=&\ \lim_{n\to\infty}\mu_X(I_{n}(\psi,\delta)),\\
        I_{n}(\psi,\delta)=&\ \left\{\varphi\in\mathbb{D}_{x_0}[0,T] \bigg|\ \sup_{t=\{t_1,\cdots,t_n\}}|\varphi(t)-\psi(t)|\leq\delta \right\},
    \end{aligned}
\end{equation*}
}where $t_1,\cdots,t_n$ are some rational points in $[0,T]$. Moreover, when these points are arranged in ascending order, the maximum distance between any two adjacent points tends to zero as 
$n$ approaches infinity.
\end{lemma}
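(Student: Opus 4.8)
The plan is to establish the approximation of the closed tube $K(\psi,\delta)$ by the cylinder sets $I_n(\psi,\delta)$ through a monotone-limit argument combined with the continuity of the probability measure $\mu_X$. First I would observe that the closed tube admits the representation $K(\psi,\delta) = \{\varphi : \sup_{t\in[0,T]\cap\mathbb{Q}}|\varphi(t)-\psi(t)|\leq\delta\}$, which is already available in the excerpt via the equivalence of the uniform-norm $\sigma$-field and the projection $\sigma$-field; this is the key structural fact that lets us reduce a supremum over an uncountable index set to one over the countable set $[0,T]\cap\mathbb{Q}$. Enumerate $[0,T]\cap\mathbb{Q}$ as $\{q_1,q_2,\dots\}$ and, to meet the mesh-size requirement, augment each initial segment with a deterministic finite grid so that the combined set $\{t_1,\dots,t_n\}$ both contains $\{q_1,\dots,q_n\}$ and has maximal adjacent gap tending to $0$; relabel in ascending order.

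Next I would define $I_n(\psi,\delta)$ as the cylinder set determined by the finite-dimensional projection onto $\{t_1,\dots,t_n\}$. By construction the index sets are nested and exhaust $[0,T]\cap\mathbb{Q}$, so the sets $I_n(\psi,\delta)$ form a decreasing sequence: $I_1 \supseteq I_2 \supseteq \cdots$, and their intersection is exactly $\bigcap_{n=1}^\infty I_n(\psi,\delta) = \{\varphi : \sup_{t\in[0,T]\cap\mathbb{Q}}|\varphi(t)-\psi(t)|\leq\delta\} = K(\psi,\delta)$. Each $I_n(\psi,\delta)$ is measurable with respect to the projection $\sigma$-field $\mathcal{B}(\mathbb{D}_{x_0}[0,T])$, and since $\mu_X$ is a finite (probability) measure, continuity from above gives $\mu_X(K(\psi,\delta)) = \mu_X\!\left(\bigcap_{n} I_n(\psi,\delta)\right) = \lim_{n\to\infty}\mu_X(I_n(\psi,\delta))$, which is the claimed limit.

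The main obstacle I anticipate is the measurability and $\sigma$-field bookkeeping: one must be careful that the closed tube $K(\psi,\delta)$, defined via an uncountable supremum over $t\in[0,T]$, actually lies in the projection $\sigma$-field and coincides with the countable-supremum version. This is precisely where the cited equivalence of $\sigma$-fields (from \cite[Lemma 2.1]{chao2019onsager} and \cite[Page 87-90]{pollard2012convergence}) is doing the real work — for general c\`{a}dl\`{a}g paths, right-continuity is what makes the supremum over rationals agree with the supremum over all of $[0,T]$, so I would invoke right-continuity of $\varphi$ and $\psi$ explicitly at this step. A secondary, more routine point is verifying that one can simultaneously achieve the nesting property and the vanishing-mesh property; this is handled by the explicit interleaving construction above and requires no delicate estimate.
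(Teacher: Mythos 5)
Your argument is correct and is essentially the same as the one the paper relies on (via the cited analogue in \cite[Lemma 2.3]{huang2023most}): reduce the closed tube to a countable supremum over rationals using right-continuity, take a nested exhausting sequence of finite rational index sets with vanishing mesh, and apply continuity from above of the finite measure $\mu_X$ to the decreasing cylinder sets whose intersection is $K(\psi,\delta)$. The only point to keep tidy is that the interleaved grid points must themselves be rational and nested across $n$, which your construction can accommodate without difficulty.
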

By this lemma, the behaviour of the jump-diffusion process $X$ with respect to the projection $\sigma$-field can be effectively approximated by its behavior on cylinder sets. The computation of the finite-dimensional distribution of a Markov process is facilitated by the Chapman--Kolmogorov equation, which functions analogously to a path integral. Lemma \ref{cylinder} serves as a crucial link between the path integral approach and the probability evaluation of a closed tube set. 

\subsection{Onsager--Machlup functional for jump-diffusion with finite jump activity}

We are now ready to demonstrate how to derive the Onsager--Machlup functionals for jump-diffusion processes with finite jump activity.

We first consider  the finite dimensional distribution of the solution process $X$ to the jump-diffusion SDE \eqref{sdeBL:finitepoisson}.
Let $0=t_0<t_1<\cdots<t_n=T$ be a time partition of time interval $[0,T]$, for simplicity we assume the partition is uniform, i.e., $\Delta t:=t_{i}-t_{i-1}=T/n$ for $i=1,\cdots,n$, and $\{A_i\subset\mathbb{R}^d\}_{i=1}^n$ are some Borel sets. 
To apply Theorem \ref{theo:finite}
on each interval $[t_{i-1}, t_i]$, we 
use the path integral  with  the classical Onsager--Machlup functional  (see  \eqref{eq:PISDE}) to represent the  transition density $\widehat{p}^{\varepsilon}_{t_i-t_{i-1}}(x_{i-1},x_i)$ of the It\^{o} diffusion SDE  with the Gaussian initial distribution \eqref{sdeB:poissonvar} as follows \cite{Kath1981path},
{\footnotesize\begin{equation}\label{subp:OM}
    \begin{split}
         \widehat{p}^{\varepsilon}_{t_i-t_{i-1}}(x_{i-1},x_i) 
        = \  \int_{C_{x_{i-1},x_i}[t_{i-1},t_i]} \exp &\bigg\{ - \int_{t_{i-1}}^{t_i} \bigg( \frac{1}{2\sigma^2}\bigg|\dot{\psi}^{(i)}_s   - \widehat{b}^{x_{i-1},\varepsilon}(\psi^{(i)}_s,s )
          \bigg|^2  \\
        &  + \frac{1}{2} \nabla\cdot  \widehat{b}^{x_{i-1},\varepsilon}(\psi^{(i)}_s,s ) \bigg) \d s \bigg\}~\mathcal{D}^{(i)}(\psi^{(i)}),
    \end{split}
\end{equation}}where   $\widehat{b}^{x_{i-1},\varepsilon}_i(x,t)$
is defined by \eqref{eqn:bh} for any $x_{i-1}$, $x$ and $t$,
$p^{x_{i-1},\epsilon}_s$  is defined via \eqref{417}, 
 $C_{x_{i-1},x_i}[t_{i-1},t_i]$ denotes the space of continuous paths starting from $x_{i-1}$ at time $t_{i-1}$ to $x_i$ at time $t_i$.
 $\psi^{(i)}\in C_{x_{i-1},x_i}[t_{i-1},t_i]$ can be regarded as the restriction of a path $\psi\in C_{x_0,x_T}[0,T]$ on the interval $[t_{i-1}, t_i]$.
 The symbol  $\mathcal{D}^{(i)}$ is the formal path measure on  $C_{x_{i-1},x_i}[t_{i-1},t_i]$. By  Theorem \ref{theo:finite} and    \eqref{subp:OM},  we represent the finite-dimensional distribution of $X$ as follows,
{ \footnotesize\begin{equation}\label{finitedistribution}
    \begin{aligned}
        &\ \mathbb{P}(X_{t_i}\in A_i,\ i=1,\cdots,n)=\ \int_{A_1\times\cdots \times A_n}\prod_{i=1}^n p_{t_i-t_{i-1}}(x_{i-1},x_i)\d x_1\cdots\d x_n\\
        \overset{*}{=}&\ \int_{A_1\times\cdots \times A_n}\prod_{i=1}^n \lim_{\varepsilon\to0}~\widehat{p}^{\varepsilon}_{t_i-t_{i-1}}(x_{i-1},x_i)\d x_1\cdots\d x_n\\
        \overset{**}{=}&\  \int_{\prod_{i=1}^{n}A_i}\prod_{i=1}^n \lim_{\varepsilon\to0}~\int_{C_{x_{i-1},x_i}[t_{i-1},t_i]} \exp\left\{ - \int_{t_{i-1}}^{t_i} \left( \frac{1}{2\sigma^2}\left|\dot{\psi}^{(i)}_s -  \widehat{b}^{x_{i-1},\varepsilon}(\psi^{(i)}_s,s)    \right|^2 \right.\right.\\
        &\ \left.\left. + \frac{1}{2} \nabla\cdot \widehat{b}^{x_{i-1},\varepsilon}(\psi^{(i)}_s,s )  \right)\d s\right\} \mathcal{D}^{(i)}(\psi^{(i)})  \d x_i\\
        =&\  \int_{\prod_{i=1}^nA_i} \lim_{\varepsilon\to0} \int_{\prod_{i=1}^{n}C_{x_{i-1},x_i}[t_{i-1},t_i]} \exp\left\{ - \sum_{i=1}^n\int_{t_{i-1}}^{t_i} \left( \frac{1}{2\sigma^2}\left|\dot{\psi}^{(i)}_s -  \widehat{b}^{x_{i-1},\varepsilon}(\psi^{(i)}_s,s )   \right|^2 \right.\right.\\
        &\ \left.\left. + \frac{1}{2} \nabla\cdot \widehat{b}^{x_{i-1},\varepsilon}(\psi^{(i)}_s,s )  \right)\d s\right\} \left(\prod_{i=1}^n\mathcal{D}^{(i)}(\psi^{(i)})\right) \d x_1\cdots\d x_{n}.
    \end{aligned}
\end{equation}}We used  the result of Theorem \ref{theo:finite} for equality $\overset{*}{=}$, which is to replace each transition kernel $p_{t_i-t_{i-1}}(x_{i-1},x_i)$ by the transition kernel $\widehat{p}^{\varepsilon}_{t_i-t_{i-1}}(x_{i-1},x_i)$ of the corresponding diffusion process \eqref{sdeB:poissonvar} in the limit $\varepsilon\to0$. The equality $\overset{**}{=}$ is the application of \eqref{subp:OM}. 

The term $\widehat{b}^{x_{i-1},\varepsilon}(\psi^{(i)}_s,s )$  in \eqref{finitedistribution} contains the ratio  $ \frac{p^{\psi_{t_{i-1}},\varepsilon}_{s}(\psi^{(i)}_s-\theta z)}{p^{\psi_{t_{i-1}},\varepsilon}_{s}(\psi^{(i)}_s)}$ by its definition \eqref{eqn:bh}. We next focus on the short time limit $(\Delta t=T/n\to0)$ of this ratio.
We know that for a path $\psi \in C_{x_0}[0,T]$, divided into partition segments $\{\psi^{(i)}_{s} := \psi_{s+t_{i-1}}, s \in [0, \Delta t]\}_{i=1}^n$, for the $i$-th interval, by Lemma \ref{lem:tp} we obtain the following asymptotic result for small $s$,
{\small \begin{equation*}
    \begin{aligned}
        \frac{p^{\psi_{t_{i-1}},\varepsilon}_{s}(\psi^{(i)}_s-\theta z)}{p^{\psi_{t_{i-1}},\varepsilon}_{s}(\psi^{(i)}_s)}
        =&\ \frac{ \int_{\mathbb{R}^d} \left(D^{(1)}(y,\psi^{(i)}_s-\theta z)s +  \mathcal{O}(s^2)\right)\mathsf{N}_{\psi_{t_{i-1}},\varepsilon}(\d y)}{ \int_{\mathbb{R}^d} \left(D^{(1)}(y,\psi^{(i)}_s )s + \mathcal{O}(s^2)\right)\mathsf{N}_{\psi_{t_{i-1}},\varepsilon}(\d y) }\\
        \xrightarrow{\varepsilon\to0}&\  \frac{   D^{(1)}(\psi_{t_{i-1}},\psi^{(i)}_s-\theta z)   }{ D^{(1)}(\psi_{t_{i-1}},\psi^{(i)}_s ) }  + \mathcal{O}(T/n)\\
        =&\ \frac{  \nu_J( -\theta z )   }{ \nu_J (0) } + \mathcal{O}(\|\psi^{(i)}-\psi_{t_{i-1}}\|_{[t_{i-1},t_i]}) + \mathcal{O}(T/n).
    \end{aligned}
\end{equation*}}The term $\mathcal{O}(\|\psi^{(i)} - \psi_{t_{i-1}}\|_{[t{i-1},t_i]})$ arises from the expansion of $\nu_J(\psi^{(i)}_s - \psi_{t_{i-1}} - \theta z)$ and $\nu_J(\psi^{(i)}_s - \psi_{t_{i-1}})$ around $\nu_J(-\theta z)$ and $\nu_J(0)$, respectively. Here, $\|\cdot\|_{[t_{i-1},t_i]}$ denotes the supremum norm on $C[t_{i-1},t_i]$. The remaining term $\mathcal{O}(T/n)$ depends on $\psi$ as well as the quantities $z$ and $\theta$. Since $\nu_J$ has bounded support and $0\leq \theta\leq1$, we know that for any family of uniformly bounded paths, the corresponding remaining terms $\mathcal{O}(T/n)$ uniformly tend to 0 as $\Delta t \to 0$. Under Assumption \ref{ass:uniform}, the asymptotic estimate for the corresponding gradient (with respect to the variable $\psi^{(i)}_s$) is
{\small   \begin{equation*}
    \begin{aligned}
        &\nabla\frac{p^{\psi_{t_{i-1}},\varepsilon}_{s}(\psi^{(i)}_s-\theta z)}{p^{\psi_{t_{i-1}},\varepsilon}_{s}(\psi^{(i)}_s)}=\ \nabla\frac{ \int_{\mathbb{R}^d} \left(D^{(1)}(y,\psi^{(i)}_s-\theta z)s +  \mathcal{O}(s^2)\right)\mathsf{N}_{\psi_{t_{i-1}},\varepsilon}(\d y)}{ \int_{\mathbb{R}^d} \left(D^{(1)}(y,\psi^{(i)}_s )s + \mathcal{O}(s^2)\right)\mathsf{N}_{\psi_{t_{i-1}},\varepsilon}(\d y) }\\
        \xrightarrow{\varepsilon\to0}&\  \nabla\frac{   D^{(1)}(\psi_{t_{i-1}},\psi^{(i)}_s-\theta z)   }{ D^{(1)}(\psi_{t_{i-1}},\psi^{(i)}_s ) }  + \mathcal{O}(T/n)\\
        =&\ \frac{  \nabla\nu_J( -\theta z)\nu_J(0) - \nu_J(-\theta z)\nabla\nu_J(0)  }{ (\nu_J (0))^2 } + \mathcal{O}(\|\psi^{(i)}-\psi_{t_{i-1}}\|_{[t_{i-1},t_i]}) + \mathcal{O}(T/n).
    \end{aligned}
\end{equation*}}
Thus the   exponential term of path integral representation  in \eqref{finitedistribution} becomes 
{\small   \begin{equation}\label{PI:poisson}
    \begin{aligned}
        &\ \sum_{i=1}^n\int_{t_{i-1}}^{t_i} \left( \frac{1}{2\sigma^2}\bigg|\dot{\psi}^{(i)}_s   - b(\psi^{(i)}_s)
        - \int_{\mathbb{R}^d}\int_0^1z\lambda(\psi^{(i)}_s -\theta z)\frac{p^{\psi_{t_{i-1}},\varepsilon}_{s}(\psi^{(i)}_s-\theta z)}{p^{\psi_{t_{i-1}},\varepsilon}_{s}(\psi^{(i)}_s)}\d\theta\ \nu_J(\d z)    \bigg|^2  \right.\\
        &\ \left.  + \frac{1}{2} \nabla\cdot \left[ b(\psi^{(i)}_s) + \int_{\mathbb{R}^d}\int_0^1 z\lambda(\psi^{(i)}_s-\theta z) \frac{p^{\psi_{t_{i-1}},\varepsilon}_{s}(\psi^{(i)}_s-\theta z)}{p^{\psi_{t_{i-1}},\varepsilon}_{s}(\psi^{(i)}_s)}\d\theta\ \nu_J(\d z) \right] \right)\d s \\ &\xrightarrow{\varepsilon\to0}\ \sum_{i=1}^n\int_{t_{i-1}}^{t_i} \left( \frac{1}{2\sigma^2}\bigg|\dot{\psi}^{(i)}_s   - b(\psi^{(i)}_s) - \int_{\mathbb{R}^d}\int_0^1z\lambda(\psi^{(i)}_s -\theta z) \right.\\
        &\ \left. \times \frac{\nu_J(\psi^{(i)}_s-\psi_{t_{i-1}}-\theta z)  }{ \nu_J(\psi^{(i)}_s-\psi_{t_{i-1}})}\d\theta\ \nu_J(\d z)    \bigg|^2  + \frac{1}{2} \nabla\cdot \left[ b(\psi^{(i)}_s) \right.\right.\\
        &\ \left.\left.  + \int_{\mathbb{R}^d}\int_0^1 z\lambda(\psi^{(i)}_s-\theta z) \frac{\nu_J(\psi^{(i)}_s-\psi_{t_{i-1}}-\theta z)  }{ \nu_J(\psi^{(i)}_s-\psi_{t_{i-1}}) }\d\theta\ \nu_J(\d z) \right]   + \mathcal{O}(T/n)  \right)\d s\\
        &= \sum_{i=1}^n\int_{t_{i-1}}^{t_i} \left( \frac{1}{2\sigma^2}\bigg|\dot{\psi}^{(i)}_s   - b(\psi^{(i)}_s) - \int_{\mathbb{R}^d}\int_0^1z\lambda(\psi^{(i)}_s -\theta z) \frac{\nu_J(-\theta z)  }{ \nu_J(0)}\d\theta\ \nu_J(\d z)    \bigg|^2 \right. \\
        &\ + \frac{1}{2} \nabla\cdot \left[ b(\psi^{(i)}_s)   + \ell(\psi_s^{(i)}) \right]  + \frac{1}{2}\tilde{\ell}(\psi^{(i)}_s) + \mathcal{O}(\|\psi^{(i)}-\psi_{t_{i-1}}\|_{[t_{i-1},t_i]}) + \mathcal{O}(T/n)  \bigg)\d s,
    \end{aligned}
\end{equation}}where $\ell_J $ and $\tilde{\ell}_J$ are given by \eqref{lJ}. By substituting \eqref{PI:poisson} into \eqref{finitedistribution}, we obtain
    {\small \begin{equation}\label{PI:finite}
    \begin{aligned}
      \mathbb{P}(X_{t_i}\in A_i,\ i=1,\cdots,n) 
        =&\ \int_{\substack{\psi\in C_{x_0,x_T}[0,T] \\ \psi_{t_i}\in A_i,i=1,\cdots,n}}\exp\left\{ - \frac{1}{2}\int_0^T\left( \frac{1}{\sigma^2}\left|\dot{\psi}_s - b(\psi_s) - \ell(\psi_s) \right|^2 \right.\right. \\
        &\ + \nabla\cdot (b(\psi_s) + \ell(\psi_s)) + \tilde{\ell}(\psi_s) \bigg)\d s \\
        &\ + \sum_{i=1}^n(T/n)\mathcal{O}\left(\|\psi^{(i)}-\psi_{t_{i-1}}\|_{[t_{i-1},t_i]}\right) + \mathcal{O} (T/n) \bigg\}\mathcal{D}(\psi).
    \end{aligned}
\end{equation}}For the time partition $0=t_0<t_1<\cdots<t_n=T$, let $A_i=B_{\delta}(\phi_{t_i})$ for some  $\phi\in C^2_{x_0}[0,T]$, where $C^2_{x_0}[0,T]$ denotes the space of the continuous paths starting form $x_0$ with continuous first-order derivatives, and $\delta>0$ is a constant. Under Lemma \ref{cylinder}, letting $n$ to tend to infinity and for small enough $\delta$, all the remaining $\mathcal{O}$ terms for the trajectories in the tube $K(\phi,\delta)$ are uniform bounded as discussed before, thus we have the following results,
  {\small \begin{equation*} 
    \begin{aligned}
     \mu_X(K(\phi,\delta))
        =&\  \lim_{n\to\infty}\mathbb{P}(X_{t_i}\in B_{\delta}(\phi_{t_i}) ,\ i=1,\cdots,n)\\
       \sim&\ \int_{\psi\in C_{x_0,x_T}[0,T],\|\psi-\phi\|\leq\delta}\exp\left\{ - \int_0^T \left( \frac{1}{2\sigma^2}\bigg|\dot{\psi}_s   - b(\psi_s)
       - \ell_J(\psi_s)  \bigg|^2 \right.\right.\\
       &\  \left.\left. + \frac{1}{2} \nabla\cdot \left[ b(\psi_s)
       + \ell_J(\psi_s)  \right]  + \frac{1}{2}\tilde{\ell}_J(\psi_s)\right)\d s \right\}\mathcal{D}(\psi),\quad \delta\downarrow0.
    \end{aligned}
\end{equation*}}

This  path integral representation is typically seen as a formal representation. We can also 
directly compute $\mu_X(K(\phi,\delta))$ 
by starting with   the finite dimensional distribution, by using the Girsanov theorem. The measure $\mu_{\widehat{X}^{x_{i-1},\varepsilon}}$, induced by the diffusion process $\widehat{X}^{x_{i-1},\varepsilon}$ in the SDE \eqref{sdeB:poissonvar}, is absolutely continuous with respect to the Wiener measure $\mu_B$. By using the Girsanov theorem for the process \eqref{sdeB:poissonvar} we obtain,
{ \small  \begin{equation*}
    \begin{aligned}
       &p_{t_i-t_{i-1}}(x_{i-1},x_i)=\lim_{\varepsilon\to0}\widehat{p}^\varepsilon_{t_i-t_{i-1}}(x_{i-1},x_i)=\  \lim_{\varepsilon\to0} \int_{C_{x_{i-1},x_i}[t_{i-1},t_i]} \frac{\d\mu_{\widehat{X}^{x_{i-1},\varepsilon}}}{\d\mu_B}(\psi)\mu_B(\d\psi)\\
       =&\ \lim_{\varepsilon\to0}\int_{C_{x_{i-1},x_i}[t_{i-1},t_i]} \exp\left\{ -\int_{t_{i-1}}^{t_i} \frac{1}{2\sigma^2}|\widehat{b}^{x_{i-1},\varepsilon}(\psi_s)|^2 + \int_{t_{i-1}}^{t_i} \widehat{b}^{x_{i-1},\varepsilon}(\psi_s)\d\psi_s \right\}\mu_B(\d \psi)\\
       \overset{*}{=}&\ \lim_{\varepsilon\to0}\int_{C_{x_{i-1},x_i}[t_{i-1},t_i]} \exp\left\{ - \int_{t_{i-1}}^{t_i} \frac{1}{2\sigma^2}\bigg| b(\psi_s)
       +  \int_{\mathbb{R}^d}\int_0^1  z \lambda(\psi_s-\theta z) \right. \\
       &\ \times\frac{ p^{x_{i-1},\varepsilon}_t(\psi_s-\theta z)}{p^{x_{i-1},\varepsilon}_t(\psi_s)} \d \theta\ \nu_J(\d z)   \bigg|^2\d s  + \int_{t_{i-1}}^{t_i}\left(b(\psi_s)
       +  \int_{\mathbb{R}^d}\int_0^1  z \lambda(\psi_s-\theta z) \right. \\
       &\ \left. \times \frac{ p^{x_{i-1},\varepsilon}_t(\psi_s-\theta z)}{p^{x_{i-1},\varepsilon}_t(\psi_s)} \d \theta\ \nu_J(\d z) \right)\circ \d \psi_s  - \frac{1}{2}\nabla\cdot\int_{t_{i-1}}^{t_i}\left(b(\psi_s)
       +  \int_{\mathbb{R}^d}\int_0^1  z \lambda(\psi_s-\theta z)\right. \\
       &\ \left.\left. \times \frac{ p^{x_{i-1},\varepsilon}_t(\psi_s-\theta z)}{p^{x_{i-1},\varepsilon}_t(\psi_s)} \d \theta\ \nu_J(\d z) \right)\d s
       \right\}\mu_B(\d\psi)\\
       \overset{**}{=}&\ \int_{C_{x_{i-1},x_i}[t_{i-1},t_i]} \exp\left\{ - \int_{t_{i-1}}^{t_i}  \frac{1}{2\sigma^2}\bigg| b(\psi_s)
       +  \int_{\mathbb{R}^d}\int_0^1  z \lambda(\psi_s-\theta z) \right.\\
       &\ \times \frac{ \nu_J(\psi_s-x_{i-1}-\theta z)}{\nu_J(\psi_s-x_{i-1})} \d \theta\ \nu_J(\d z)   \bigg|^2\d s  + \int_{t_{i-1}}^{t_i}\left(b(\psi_s)
       +  \int_{\mathbb{R}^d}\int_0^1  z \lambda(\psi_s-\theta z) \right. \\
       &\ \left. \times \frac{ \nu_J(\psi_s-x_{i-1}-\theta z)}{\nu_J(\psi_s-x_{i-1})} \d \theta\ \nu_J(\d z) \right)\circ\d \psi_s - \frac{1}{2}\nabla\cdot\int_{t_{i-1}}^{t_i}\bigg(b(\psi_s)
       \\
       &\ \left.\left.  +  \int_{\mathbb{R}^d}\int_0^1  z \lambda(\psi_s-\theta z) \frac{ \nu_J(\psi_s-x_{i-1}-\theta z)}{\nu_J(\psi_s-x_{i-1})} \d \theta\ \nu_J(\d z) \right) \d s + \mathcal{O}(T/n) \right \}\mu_B(\d\psi)\\
        \overset{***}{=}&\ \int_{C_{x_{i-1},x_i}[t_{i-1},t_i]} \exp\left\{ - \int_{t_{i-1}}^{t_i} \frac{1}{2\sigma^2}\left| b(\psi_s)
       + \ell(\psi_s) \right|^2\d s + \int_{t_{i-1}}^{t_i}\left(b(\psi_s)\right. \right. \\
       &\ \left. 
       + \ell(\psi_s) \right)\d \psi_s  - \frac{1}{2}\int_{t_{i-1}}^{t_i}\tilde{\ell }(\psi_s)\d s  +  \mathcal{O}(\|\psi - \psi_{t_{i-1}}\|_{[t_{i-1},t_i]}) + \mathcal{O}(T/n) \bigg\}\mu_B(\d\psi),
    \end{aligned}
\end{equation*}}where $\psi$ is understood as a sample trajectory of Brownian motion and the integrals with differential elements $\d \psi_s$ are understood in the sense of It\^{o} stochastic integrals. And $\circ\d \psi_s$ in equality $\overset{*}{=}$ denotes the Stratonovich integral. We use Stratonovich integral since we need to ``separate'' the dominant term from the integrands of equalities $\overset{**}{=}$ and $\overset{***}{=}$ (that is we ``separate'' $\nu_J(\psi_s-x_{i-1})$ as $\nu_J(0) +\mathcal{O}(\psi_s-x_{i-1})$ and similar separations for $\nu_J(\psi_s-x_{i-1}-\theta z)$, $\nabla \nu_J(\psi_s -x_{i-1}-\theta z)$ and $\nabla\nu_J(\psi_s-x_{i-1})$), and such calculations for Stratonovich integral coincide with the classical calculus \cite{stratonovich1966new}.

Consequently, we obtain the following equation:
    { \begin{equation}\label{PI:finite2}
    \begin{aligned}
     &\mathbb{P}(X_{t_i}\in A_i,\ i=1,\cdots,n)
        =\ \int_{\substack{\psi\in C_{x_0,x_T[0,T]} \\ \psi_{t_i}\in A_i,i=1,\cdots,n}}\exp\left\{ - \int_0^T \left( \frac{1}{2\sigma^2}\left| b(\psi_s) + \ell(\psi_s)  \right|^2\d s  \right.\right.
       \\
       &\  + \int_0^T\left(b(\psi_s) +  \ell(\psi_s) \right)\d \psi_s -\frac{1}{2}\int_0^T\tilde{\ell}(\psi_s)\d s\\ 
       &\  \left.  + \sum_{i=1}^n(T/n)\mathcal{O}\left( \|\psi - \psi_{t_{i-1}}\|_{[t_{t-1},t_i]} \right) + \mathcal{O}(T/n) \right\}\mu_B(\d\psi).
    \end{aligned}
\end{equation}}

Again, by Lemma \ref{cylinder}, letting $n$ to tend to infinity and for small enough $\delta$, all the remaining $\mathcal{O}$ terms for the trajectories in the tube $K(\phi,\delta)$ are uniform bounded as discussed before, we obtain that,
 { \begin{equation*} 
    \begin{aligned}
     \mu_X(K(\phi,\delta))
        =&\  \lim_{n\to\infty}\mathbb{P}(X_{t_i}\in B_{\delta}(\phi_{t_i}) ,\ i=1,\cdots,n)\\
       =&\ \int_{\psi\in C_{x_0,x_T}[0,T],\|\psi-\phi\|\leq\delta}\exp\left\{ - \int_0^T \left( \frac{1}{2\sigma^2}\left| b(\psi_s)
       + \ell_J(\psi_s)  \right|^2\d s \right.\right.\\
        &\ \left.  + \int_0^T\left(b(\psi_s)
       + \ell_J(\psi_s)\right)\d \psi_s - \frac{1}{2}\int_0^T \tilde{\ell}_J(\psi_s)\d s  \right\}\mu_B(\d\psi)\\
        \sim &\ \exp\left\{ - \frac{1}{2}\int_0^T \left( \frac{1}{\sigma^2}\left|\dot{\phi}_s   - b(\phi_s)
       - \ell_J(\phi_s)  \right|^2 \right.\right.\\
       &\  +  \nabla\cdot \left[ b(\phi_s)
       + \ell_J(\phi_s) \right] + \tilde{\ell}_J(\phi_s) \bigg)\d s \bigg\}\mu_B(K(x_0,\delta)),\quad \delta\downarrow0,
    \end{aligned}
\end{equation*}}
where the last asymptotic calculation follows the classical Onsager--Machlup method, as detailed in \cite{Durr1978,Ikeda1980}, to derive the Onsager--Machlup functional.

To summarize, we have the following statement.

\begin{theorem}\label{theo:PI}
   For the jump-diffusion SDE \eqref{sdeBL:finitepoisson}, assuming that Assumption \ref{ass:main} and \ref{ass:uniform} are satisfied, for any $\phi \in C^2_{x_0}[0,T]$ and sufficiently small $\delta > 0$, it follows that
 { \begin{equation}\label{muX}
    \begin{aligned}
     \mu_X(K(\phi,\delta))
       \sim &\ \int_{\psi\in C_{x_0,x_T}[0,T],\|\psi-\phi\|\leq\delta}\exp\left\{ - \int_0^T \left( \frac{1}{2\sigma^2}\left|\dot{\psi}_s   - b(\psi_s)
       - \ell_J(\psi_s)  \right|^2 \right.\right.\\
       &\  \left.\left. + \frac{1}{2} \nabla\cdot \left[ b(\psi_s)
       + \ell_J(\psi_s)  \right]  + \frac{1}{2}\tilde{\ell}_J(\psi_s)\right)\d s \right\}\mathcal{D}(\psi)\\
       \sim&\ \exp\left\{ - \frac{1}{2}\int_0^T \left( \frac{1}{\sigma^2}\left|\dot{\phi}_s   - b(\phi_s)
       - \ell_J(\phi_s)  \right|^2 \right.\right.\\
       &\     +  \nabla\cdot \left[ b(\phi_s)
       + \ell_J(\phi_s)  \right]   + \tilde{\ell}_J(\psi_s) \bigg)\d s \bigg\}\mu_B(K(x_0,\delta)) ,\quad \delta \downarrow0,
    \end{aligned}
\end{equation}}
where $\ell_J $ and $\tilde{\ell}_J$ are given by \eqref{lJ} and $\mu_B$ is the Wiener measure. 
\end{theorem}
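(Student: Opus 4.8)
The plan is to combine the probability-flow equivalence of Theorem~\ref{theo:finite} with the classical Onsager--Machlup tube estimate for pure diffusions. I would first fix a uniform partition $0=t_0<\cdots<t_n=T$ with $\Delta t=T/n$ and, via the Chapman--Kolmogorov equation, write the finite-dimensional law $\mathbb{P}(X_{t_i}\in A_i,\ i=1,\cdots,n)$ as the iterated integral $\int_{\prod_i A_i}\prod_{i=1}^n p_{\Delta t}(x_{i-1},x_i)\,\d x_1\cdots\d x_n$ for the transition kernel of \eqref{sdeBL:finitepoisson}. Theorem~\ref{theo:finite} then replaces each factor by $\lim_{\varepsilon\to0}\widehat{p}^{\varepsilon}_{\Delta t}(x_{i-1},x_i)$, the transition kernel of the auxiliary \emph{diffusion} \eqref{sdeB:poissonvar} whose drift $\widehat{b}^{x_{i-1},\varepsilon}$ is given by \eqref{eqn:bh}; the non-explosion and well-posedness established there make this substitution, and the subsequent change of measure, legitimate. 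For each diffusion kernel I would use both the path-integral representation \eqref{subp:OM} and, for rigour, the Girsanov change of measure against the Wiener measure $\mu_B$.

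The technical core is the short-time limit of the singular ratio $p^{x_{i-1},\varepsilon}_s(\psi_s-\theta z)/p^{x_{i-1},\varepsilon}_s(\psi_s)$ appearing inside $\widehat{b}^{x_{i-1},\varepsilon}$. Writing $p^{x_{i-1},\varepsilon}_s$ as the $\mathsf{N}_{x_{i-1},\varepsilon}$-average of $p_s(y,\cdot)$ and inserting the expansion \eqref{expansion:p} of Lemma~\ref{lem:tp}, the exponentially small $C$-part and the diagonal singularity contribute nothing after mollification (the diagonal being Lebesgue-null), so the leading behaviour is governed by $D^{(1)}(y,x)=\lambda(y)\nu_J(x-y)$. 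Sending $\varepsilon\to0$ collapses $\mathsf{N}_{x_{i-1},\varepsilon}$ to $\delta_{x_{i-1}}$, and then $\Delta t\to0$ with $\psi$ continuous turns the ratio into $\nu_J(-\theta z)/\nu_J(0)$ up to errors $\mathcal{O}(\|\psi^{(i)}-\psi_{t_{i-1}}\|_{[t_{i-1},t_i]})+\mathcal{O}(T/n)$; under Assumption~\ref{ass:uniform} the same analysis applies to the gradient of the ratio, producing the extra term $\tilde{\ell}_J$ in the divergence. Substituting these limits back identifies the auxiliary diffusion with one of drift $b+\ell_J$ and yields the path-integral form \eqref{PI:finite} and the Girsanov form \eqref{PI:finite2} of the finite-dimensional law, with accumulated remainder $\sum_{i=1}^n(T/n)\,\mathcal{O}(\|\psi^{(i)}-\psi_{t_{i-1}}\|_{[t_{i-1},t_i]})+\mathcal{O}(T/n)$.

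To reach the tube probability I would specialise to $A_i=B_\delta(\phi_{t_i})$ with $\phi\in C^2_{x_0}[0,T]$ and invoke Lemma~\ref{cylinder}: the cylinder sets $I_n(\phi,\delta)$ exhaust $K(\phi,\delta)$ as $n\to\infty$ with mesh tending to $0$. Since $\nu_J$ has bounded support and the competing trajectories stay in a fixed tube around the bounded curve $\phi$, every remainder term is uniformly bounded and tends to $0$, leaving exactly the Girsanov density of the diffusion SDE with drift $b+\ell_J$ times the scalar factor $\exp(-\frac{1}{2}\int_0^T\tilde{\ell}_J(\psi_s)\,\d s)$. Applying the classical Onsager--Machlup asymptotics for diffusions \cite{Durr1978,Ikeda1980} to this diffusion then extracts, as $\delta\downarrow0$, the factor $\exp\{-\frac{1}{2}\int_0^T[\sigma^{-2}|\dot{\phi}_s-b(\phi_s)-\ell_J(\phi_s)|^2+\nabla\cdot(b(\phi_s)+\ell_J(\phi_s))]\,\d s\}\,\mu_B(K(x_0,\delta))$, and combining with the $\tilde{\ell}_J$ factor gives \eqref{muX}.

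The step I expect to be the main obstacle is the simultaneous control of the three limits $\varepsilon\to0$, $n\to\infty$ and $\delta\downarrow0$: one must show that the $\varepsilon$-limit of the ratio commutes with the (formal) path integration and with the $n\to\infty$ exhaustion in Lemma~\ref{cylinder}, and that the summed error $\sum_i(T/n)\,\mathcal{O}(\|\psi^{(i)}-\psi_{t_{i-1}}\|)$ is genuinely $o(1)$ uniformly over $\psi\in K(\phi,\delta)$ --- this is precisely where the bounded support of $\nu_J$, the smoothness in Assumption~\ref{ass:main}, and the uniform-convergence Assumption~\ref{ass:uniform} are used. A secondary point is checking the Girsanov (Novikov-type) integrability of the perturbed drift $b+\ell_J$ on each subinterval so that the change of measure against $\mu_B$ is valid; this again follows from the local-integrability bound on $\widehat{b}^{x_0,\varepsilon}$ invoked in the proof of Theorem~\ref{theo:finite}.
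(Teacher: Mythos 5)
Your proposal follows essentially the same route as the paper: probability-flow equivalence (Theorem~\ref{theo:finite}) applied on each partition interval, the short-time expansion of Lemma~\ref{lem:tp} to turn the density ratio in $\widehat{b}^{x_{i-1},\varepsilon}$ into $\nu_J(-\theta z)/\nu_J(0)$ (and its gradient into the $\tilde{\ell}_J$ term under Assumption~\ref{ass:uniform}), leading to \eqref{PI:finite} and the Girsanov form \eqref{PI:finite2}, then Lemma~\ref{cylinder} with $A_i=B_\delta(\phi_{t_i})$ and the classical asymptotics of \cite{Durr1978,Ikeda1980} as $\delta\downarrow0$. The error control you flag (uniformity of the $\mathcal{O}$ terms via the bounded support of $\nu_J$ and the tube) is exactly the argument the paper uses, so the proposal is correct and matches the paper's proof.
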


\begin{remark}
Deriving the results of Theorem \ref{theo:PI} relies on the  probability flow equivalence between jump-diffusion and pure diffusion processes (Theorem \ref{theo:finite}), and the existing result of the OM functional for the diffusion process. If the diffusion coefficient $\sigma=0$, the L\'{e}vy--Fokker--Planck equation \eqref{FPE:var} in the form of continuity equation 
{\small \begin{equation*} 
    \begin{aligned}
        \frac{\partial p^{x_0,\varepsilon}_t(x)}{\partial t} 
        =&\  - \sum_{i=1}^d \partial_i  \left[  \underbrace{\left(b_i(x)  +  \int_{\mathbb{R}^d}\int_0^1  z_i \lambda(x-\theta z) \frac{ p^{x_0,\varepsilon}_t(x-\theta z)}{p^{x_0,\varepsilon}_t(x)} \d \theta\ \nu_J(\d z) \right)}_{\mbox{``new'' drift} } p^{x_0,\varepsilon}_t(x) \right],
    \end{aligned}
\end{equation*}} corresponds to a deterministic system where the velocity field is defined by the ``new'' drift above. However, it is important to note that deterministic systems do not possess Onsager--Machlup functionals. Therefore, our probability flow approach is only applicable to jump-diffusion processes and cannot be straightforwardly extended to pure jump processes.
\end{remark}
\begin{remark}
For simplicity and clarity, this paper mainly concentrates on cases involving additive Brownian noise (constant $\sigma$).  It is both possible and simple to extend our findings to situations with multiplicative Brownian noise.  The probability flow equivalence between jump-diffusion and diffusion processes, when involving multiplicative Brownian noise, follows a similar approach to that described in \eqref{FPE:var}. Additionally, the literature provides established methods for the short-time approximation of density in jump-diffusion processes with multiplicative noise \cite{yu2007closed}, as well as techniques for the Onsager--Machlup functional in diffusion processes with such noise \cite{zeitouni1988existence}. Consequently, the conclusions of Theorem \ref{theo:PI} can be  applied to jump-diffusion processes with multiplicative Brownian noise, provided the diffusion coefficient $\sigma(x)$ adheres to specific established conditions detailed in references \cite{yu2007closed, zeitouni1988existence}. The OM functional in \eqref{muX} will be updated as follows for multiplicative noise:
\begin{equation*}
    \begin{aligned}
        &\ \frac{1}{2}\int_0^T \left[ \left(\dot{\phi}_s   - b(\phi_s)
       - \ell_J(\phi_s)  \right)^\top\Sigma(\phi_s)\left(\dot{\phi}_s   - b(\phi_s)
       - \ell_J(\phi_s)  \right)   + \tilde{\ell}_J(\psi_s) \right. \\
            +&\  \frac{1}{\sqrt{|\Sigma(\phi_s)|}}\sum_{i=1}^d\frac{\partial}{\partial x_i}\left[\left( b_i(\phi_s)
       + (\ell_J)_i(\phi_s) \right)\sqrt{|\Sigma(\phi_s)|} \right]  + \frac{1}{\sqrt{|\Sigma(\phi_s)|}}\sum_{i=1}^d(\ell_J)_i\frac{\partial}{\partial x_i}\sqrt{|\Sigma(\phi_s)|}\bigg]\d s
    \end{aligned}
\end{equation*}
where $\Sigma(\cdot)=(\sigma(\cdot)\sigma(\cdot)^\top)^{-1}$, and $|\Sigma|$
is the determinate of the matrix $\Sigma$.
\end{remark}

\section{Jump-Diffusion with Infinite Jump Activity}\label{sec:infinite}

In the previous section, we  have derived the Onsager--Machlup functional for jump-diffusion processes with finite jump activity. We next study the more challenging case of  jump-diffusion processes with infinite activity.

Consider the following jump-diffusion SDE with infinite jump activity:
\begin{equation}\label{sdeBL}
   \left\{\begin{aligned}
         \d X^i_t=&\ b_i(X_{t-})\d t +  \sigma \d B^j_t + \int_{|z|<1} F_i(X_{t-},z)  \tilde{\mathcal{N}}(\d t,\d z) \\
        &\ + \int_{|z|\geq1}G_i(X_{t-},z) \mathcal{N}(\d t,\d z),\quad t>0,\quad i=1,\cdots,d,\\
          X_0=&\ x_0\in\mathbb{R}^d,
   \end{aligned}\right.
\end{equation}
where $X_t\in\mathbb{R}^d$ denotes a random particle's position, the mappings $b_i:\mathbb{R}^d\to\mathbb{R}$, $F_i:\mathbb{R}^d\times\mathbb{R}^d\to\mathbb{R}$ and $G_i:\mathbb{R}^d\times\mathbb{R}^d\to\mathbb{R}$ are all assumed to be measurable for $1\leq i\leq d$, $\sigma$ is a positive constant, and $B=(B^1,\cdots,B^d)$ is a standard Brownian motion in $\mathbb{R}^d$ and $\mathcal{N}$ an independent Poisson random measure on  $\mathbb{R}_+\times(\mathbb{R}^d\backslash\{0\})$ with associated compensator $\tilde{\mathcal{N}}$ and intensity measure $\nu$, where $\nu$ is a L\'{e}vy measure which is assumed to be absolutely continuous respect to the Lebesgue measure, i.e., $\tilde{\mathcal{N}}(\d t,\d z)=\mathcal{N}(\d t,\d z)-\d t\nu(\d z)$ and satisfies $\int_{\mathbb{R}^d}(1\wedge |z|^2)\nu(\d z)<\infty.$

For jump-diffusion SDE \eqref{sdeBL}, we make the following assumption.
\begin{assumption}\label{assum:bnu}
The stochastic differential equation \eqref{sdeBL} has a unique solution. The transition density $p_t(y,x)$ exists for all $x,y\in\mathbb{R}^d$ and all $t>0$. The density $p_t(y,x)$ is continuously differentiable with respect to $t$, at least twice continuously differentiable with respect to $x$ and $y$. 
\end{assumption}

There are some sufficient conditions for Assumption \ref{assum:bnu} to hold, see for example \cite{cass2009smooth}.

\subsection{L\'{e}vy--Fokker--Planck equation for jump-diffusion with infinite jump activity and Probability flow equivalence}

The generator of the solution process to \eqref{sdeBL} is defined on $C_0^2(\mathbb{R}^d)$ as follows \cite{applebaum2009levy,duan2015introduction},
{ \begin{equation}\label{generator}
    \begin{aligned}
         (\mathcal{L}f)(x)=&\ \sum_{i=1}^d b_i(x)(\partial_if)(x) + \frac{\sigma^2}{2}\sum_{i,j=1}^d  (\partial_i\partial_jf)(x) + \int_{|z|\geq 1}\left[ f(x+G(x,z)) - f(x) \right]\nu(\d z)\\
        &\ + \int_{|z|<1}\left[ f(x+F(x,z)) - f(x) - \sum_{i=1}^d F_i(x,z)(\partial_i f)(x)\right]\nu(\d z),
    \end{aligned}
\end{equation}}for each function $f \in C^2_0(\mathbb{R}^d)$ and each point $x \in \mathbb{R}^d$. Again, Taylor's theorem is used to approximate the functions $f(x + F(x,z))$ and $f(x + G(x,z))$ at the point $x$ as follows:
{ \small\begin{equation}\label{taylorexpansion:f}
    \begin{aligned}
         f(x+F(x,z))=&\ 
         f(x) + \sum_{i=1}^d F_i(x,z)\int_0^1 (\partial_if)(x+\theta F(x,z))\d \theta ,\\
         f(x+G(x,z))=&\ 
         f(x) + \sum_{i=1}^d G_i(x,z)\int_0^1 (\partial_if)(x+\theta G(x,z))\d \theta.
    \end{aligned}
\end{equation}}

For every $\theta\in[0,1]$ and $z\in\mathbb{R}^d\backslash\{0\}$, define the mappings
\begin{equation*}
    \begin{aligned}
        \mathcal{T}_{F,\theta,z}:\ x\mapsto x + \theta F(x,z),\quad
        \mathcal{T}_{G,\theta,z}:\  x\mapsto x + \theta G(x,z),
    \end{aligned}
\end{equation*}
and assume both are invertible
for every $\theta\in[0,1]$ and $z\in\mathbb{R}^d\backslash\{0\}$.

Substitute \eqref{taylorexpansion:f} to \eqref{generator} and let $p_t(x_0,\cdot)$ be the probability density for $X_t$ associated with \eqref{sdeBL}, we then obtain the expression for the adjoint operator of $\mathcal{L}$ as follows, for any $f\in C^2_0(\mathbb{R}^d)$,
{\small\begin{equation*}
    \begin{aligned}
        &\int_{\mathbb{R}^d}\mathcal{L}f (x)p_t(x_0,x)\d x
        =\ -\int_{\mathbb{R}^d}\left(\sum_{i=1}^d \partial_i\left[ \left( b_i(x) - \int_{|z|<1} F_i(x,z)\nu(\d z)\right)p_t(x_0,x)\right] \right.\\
        &\ \left. -  \frac{\sigma^2}{2}\sum_{i,j=1}^d \partial_i\partial_j p_t(x_0,x) \right)f(x)\d x  - \int_{\mathbb{R}^d}\int_{|z|<1}\int_0^1\sum_{i=1}^dF_i(\mathcal{T}^{-1}_{F,\theta,z}(x),z)  (\partial_if)(x)\\
        &\ \times p_t(x_0,\mathcal{T}^{-1}_{F,\theta,z}(x))|\mathcal{J}_{F,\theta,z}(x)|\d \theta \nu(\d z)\d x - \int_{\mathbb{R}^d}\int_{|z|\geq1}\int_0^1 \sum_{i=1}^d  G_i(\mathcal{T}^{-1}_{G,\theta,z}(x),z) (\partial_if)(x)\\
        &\ \times p_t(x_0,\mathcal{T}^{-1}_{G,\theta,z}(x))|\mathcal{J}_{G,\theta,z}(x)|\d \theta \nu(\d z) \d x =:\ \int_{\mathbb{R}^d}f (x)\mathcal{L}^*p_t(x_0,x)\d x,
    \end{aligned}
\end{equation*}
}where we have used the integration by parts and the fact that $f$ vanishes at infinity, and $\mathcal{J}_{F,\theta,z}$, $\mathcal{J}_{G,\theta,z}$ are the Jacobian matrices for the inverse mappings of $\mathcal{T}_{F,\theta,z}$ and $\mathcal{T}_{G,\theta,z}$ respectively, $|\mathcal{J}_{F,\theta,z}|$, $|\mathcal{J}_{G,\theta,z}|$ denote their determinants.

We can express the corresponding L\'{e}vy--Fokker-Planck equation of \eqref{sdeBL} for any $x, y \in \mathbb{R}^d$ and $t > 0$ as follows:
{\small  \begin{equation}\label{eqn:LFPEinfinite}
    \begin{aligned}
        \frac{\partial p_t(y,x)}{\partial t} 
        =&\ -\sum_{i=1}^d\ \partial_i\left[\left(b_i(x) - \int_{|z|<1} F_i(x,z)\nu(\d z)\right)p_t(y,x) \right]
         + \frac{\sigma^2}{2} \sum_{i,j=1}^d\frac{\partial^2}{\partial x_i\partial x_j}  p_t(y,x) \\
         &\ - \int_{|z|<1}\int_0^1\sum_{i=1}^d\partial_i\left( F_i(\mathcal{T}^{-1}_{F,\theta,z}(x),z) p_t(y,\mathcal{T}^{-1}_{F,\theta,z}(x))|\mathcal{J}_{F,\theta,z}(x)| \right) \d \theta \nu(\d z)\\
        &\ - \int_{|z|\geq1}\int_0^1\sum_{i=1}^d \partial_i\left( G_i(\mathcal{T}^{-1}_{G,\theta,z}(x),z)  p_t(y,\mathcal{T}^{-1}_{G,\theta,z}(x))|\mathcal{J}_{G,\theta,z}(x)|\right)\d \theta \nu(\d z).
    \end{aligned}
\end{equation}}

We now turn to the discussion of probability flow equivalence between jump-diffusion processes and diffusion processes. By employing a similar method as in the previous section, we can establish the following results.
\begin{theorem} 
    Assuming that Assumption \ref{assum:bnu} holds, and the mappings $\mathcal{T}_{F,\theta,z}$ and $\mathcal{T}_{G,\theta,z}$ are invertible for every $\theta\in[0,1]$ and $z\in\mathbb{R}^d\backslash\{0\}$. Let $p_t$ denotes the transition density of \eqref{sdeBL}. If $p_t(y,x) > 0$ for all $t > 0$ and $y,x\in\mathbb{R}$, then the solution process of \eqref{sdeBL} shares all single marginal distributions with the solution process of the following It\^{o} diffusion SDE in the limit $\varepsilon\to0$:
   {\small \begin{equation}\label{sdeB}
       \left\{\begin{aligned}
            \d \widehat{X}^{x_0,\varepsilon}_t=&\  \left(b(\widehat{X}^{x_0,\varepsilon}_t) - \int_{|z|<1} F(\widehat{X}^{x_0,\varepsilon}_t,z)\nu(\d z) + \int_{|z|<1}\int_0^1 F(\mathcal{T}^{-1}_{F,\theta,z}(\widehat{X}^{x_0,\varepsilon}_t),z)\right. \\
            &\  \times |\mathcal{J}_{F,\theta,z}(\widehat{X}^{x_0,\varepsilon}_t)| \frac{  p^{x_0,\varepsilon}_t(\mathcal{T}^{-1}_{F,\theta,z}(\widehat{X}^{x_0,\varepsilon}_t))}{p^{x_0,\varepsilon}_t(x_0,\widehat{X}^{\varepsilon}_t)}\d \theta\ \nu(\d z)  + \int_{|z|\geq1}\int_0^1 G(\mathcal{T}^{-1}_{G,\theta,z}(\widehat{X}^{x_0,\varepsilon}_t),z)\\
            &\ \left. \times|\mathcal{J}_{G,\theta,z}(\widehat{X}^{x_0,\varepsilon}_t)| \frac{  p^{x_0,\varepsilon}_t(\mathcal{T}^{-1}_{G,\theta,z}(\widehat{X}^{x_0,\varepsilon}_t))}{p^{x_0,\varepsilon}_t(x_0,\widehat{X}^{\varepsilon}_t)}\d \theta\ \nu(\d z)  \right)\d t + \sigma \d B_t ,\quad t>0,\\
            \widehat{X}^{x_0,\varepsilon}_0\sim&\ \mathsf{N}_{x_0,\varepsilon},
       \end{aligned}\right.
    \end{equation}}and for any $A\in\mathcal{B}(\mathbb{R}^d)$ and $t\geq0$, 
    \begin{equation}
        \mathbb{P}(X_t\in A| X_0=x_0)=\ \lim_{\varepsilon\to0}\mathbb{P}(\widehat{X}^{x_0,\varepsilon}_t\in A|\widehat{X}^{x_0,\varepsilon}_0=x_0).
    \end{equation}
\end{theorem}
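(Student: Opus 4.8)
The plan is to repeat, \emph{mutatis mutandis}, the argument of Theorem~\ref{theo:finite}, now starting from the L\'{e}vy--Fokker--Planck equation \eqref{eqn:LFPEinfinite} instead of \eqref{FPE:poisson}. The three ingredients are: (i) regularise the singular initial datum $\delta_{x_0}$ by the Gaussian $\mathsf{N}_{x_0,\varepsilon}$; (ii) read \eqref{eqn:LFPEinfinite} with this smooth initial law as the Fokker--Planck equation of an It\^{o} diffusion whose drift is exactly the bracketed velocity field in \eqref{sdeB}; and (iii) pass to the limit $\varepsilon\to0$ using weak convergence of the jump-diffusions on bounded time intervals. The invertibility hypothesis on $\mathcal{T}_{F,\theta,z}$ and $\mathcal{T}_{G,\theta,z}$ is what legitimises the changes of variables $x\mapsto\mathcal{T}_{F,\theta,z}(x)$ and $x\mapsto\mathcal{T}_{G,\theta,z}(x)$ used to move the nonlocal terms into divergence form, and the positivity $p_t(y,x)>0$ is what makes the density ratios in \eqref{sdeB} well defined.

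First I would fix $\varepsilon>0$ and let $p^{x_0,\varepsilon}_t$ be the law at time $t$ of the solution $X^{x_0,\varepsilon}$ of \eqref{sdeBL} with $X^{x_0,\varepsilon}_0\sim\mathsf{N}_{x_0,\varepsilon}$; by Assumption~\ref{assum:bnu} this density is strictly positive and of class $C^{1,2}$, so every ratio $p^{x_0,\varepsilon}_t(\mathcal{T}^{-1}_{F,\theta,z}(x))/p^{x_0,\varepsilon}_t(x)$ and $p^{x_0,\varepsilon}_t(\mathcal{T}^{-1}_{G,\theta,z}(x))/p^{x_0,\varepsilon}_t(x)$ is meaningful. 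The computation preceding \eqref{eqn:LFPEinfinite} — Taylor expansion \eqref{taylorexpansion:f}, integration by parts, and the two changes of variables producing the Jacobian factors $|\mathcal{J}_{F,\theta,z}|,|\mathcal{J}_{G,\theta,z}|$ — rewrites \eqref{eqn:LFPEinfinite} for $p^{x_0,\varepsilon}_t$ in the continuity-plus-Laplacian form $\partial_t p^{x_0,\varepsilon}_t=-\nabla\cdot\bigl(\widehat b^{x_0,\varepsilon}(\cdot,t)\,p^{x_0,\varepsilon}_t\bigr)+\frac{\sigma^2}{2}\Delta p^{x_0,\varepsilon}_t$, where $\widehat b^{x_0,\varepsilon}$ is precisely the drift of \eqref{sdeB}; that is, \eqref{eqn:LFPEinfinite} is the Fokker--Planck equation of \eqref{sdeB}.

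Next I would establish well-posedness of \eqref{sdeB} and identify its marginals. The key quantitative step is to show that on every ball $B_n(0)$ and every finite time interval one has $|\widehat b^{x_0,\varepsilon}|\in L^{q_n}_{\mathrm{loc}}(\mathbb{R}_+;L^{p_n}(B_n(0)))$ for suitable exponents with $\frac{d}{p_n}+\frac{2}{q_n}<1$; then the Krylov--R\"{o}ckner/Zhang theory \cite[Theorem~1.3]{zhang2011stochastic} gives a unique strong solution up to an explosion time. Since $p^{x_0,\varepsilon}$ solves the associated Fokker--Planck equation and that PDE has a unique solution under Assumption~\ref{assum:bnu}, non-explosion of $\widehat X^{x_0,\varepsilon}$ (inherited from the global well-posedness of \eqref{sdeBL}) follows and the time-$t$ law of \eqref{sdeB} is identified with $p^{x_0,\varepsilon}_t$; hence $X^{x_0,\varepsilon}$ and $\widehat X^{x_0,\varepsilon}$ share all one-time marginals. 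Finally, $\mathsf{N}_{x_0,\varepsilon}\Rightarrow\delta_{x_0}$ as $\varepsilon\to0$, and by continuous dependence of \eqref{sdeBL} on its initial law \cite[Theorems~7.1 \& 7.2]{billingsley2013convergence} one gets $X^{x_0,\varepsilon}\Rightarrow X$ with $X_0=x_0$ on bounded intervals, so $p^{x_0,\varepsilon}_t\to p_t(x_0,\cdot)$ and the claimed identity $\mathbb{P}(X_t\in A\mid X_0=x_0)=\lim_{\varepsilon\to0}\mathbb{P}(\widehat X^{x_0,\varepsilon}_t\in A\mid\widehat X^{x_0,\varepsilon}_0=x_0)$ follows.

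The main obstacle is precisely the $L^{q}_tL^{p}_x$ bound on $\widehat b^{x_0,\varepsilon}$ for the small-jump part $\int_{|z|<1}$, where the L\'{e}vy measure has infinite mass at the origin. There the integrand $F(\mathcal{T}^{-1}_{F,\theta,z}(x),z)\,|\mathcal{J}_{F,\theta,z}(x)|\,p^{x_0,\varepsilon}_t(\mathcal{T}^{-1}_{F,\theta,z}(x))/p^{x_0,\varepsilon}_t(x)$, together with the compensating term $-F(x,z)$, must be shown $\nu$-integrable near $z=0$. This needs the expansions $\mathcal{T}^{-1}_{F,\theta,z}(x)=x-\theta F(x,z)+O(|F(x,z)|^2)$, $|\mathcal{J}_{F,\theta,z}(x)|=1-\theta\,\nabla\!\cdot\!F(x,z)+O(|F(x,z)|^2)$, and $p^{x_0,\varepsilon}_t(\mathcal{T}^{-1}_{F,\theta,z}(x))/p^{x_0,\varepsilon}_t(x)=1-\theta F(x,z)\cdot\nabla\log p^{x_0,\varepsilon}_t(x)+O(|F(x,z)|^2)$, so that the first-order terms in $F$ cancel against $F(x,z)$ and the remainder is $O(|F(x,z)|^2)=O(|z|^2)$, which is integrable by $\int(1\wedge|z|^2)\nu(\d z)<\infty$. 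One then needs these estimates locally uniformly in $(x,t)$ — in particular a local bound on $\nabla\log p^{x_0,\varepsilon}_t$, supplied by Gaussian-type two-sided bounds for the mollified density or by a short-time expansion analogous to Lemma~\ref{lem:tp}, and a local bound on the $C^2$-norm of $F(\cdot,z)$ — strong enough to realise admissible exponents $p_n,q_n$; if one additionally wants \eqref{sdeB} to be well posed from arbitrary starting points these bounds should be uniform in $\varepsilon$. The large-jump part $\int_{|z|\ge1}$ involves only a finite measure and is handled exactly as the Poisson term in the proof of Theorem~\ref{theo:finite}.
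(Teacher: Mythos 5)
Your proposal follows essentially the same route as the paper's proof: mollify the initial condition by $\mathsf{N}_{x_0,\varepsilon}$, rewrite the L\'{e}vy--Fokker--Planck equation \eqref{eqn:LFPEinfinite} as a Fokker--Planck equation whose drift is the bracketed field in \eqref{sdeB}, invoke the singular-drift well-posedness theory as in Theorem \ref{theo:finite}, and pass to the limit $\varepsilon\to0$ by weak convergence. In fact you go somewhat further than the paper, which simply asserts the new drift is well defined for $t>0$; your explicit cancellation of the compensator $-F(x,z)$ against the first-order expansion of $F(\mathcal{T}^{-1}_{F,\theta,z}(x),z)\,|\mathcal{J}_{F,\theta,z}(x)|\,p^{x_0,\varepsilon}_t(\mathcal{T}^{-1}_{F,\theta,z}(x))/p^{x_0,\varepsilon}_t(x)$ near $z=0$, leaving an $O(|z|^2)$ remainder integrable against $\nu$, is precisely the point the paper leaves implicit for the infinite-activity case.
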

\begin{proof}
The proof follows a similar way as the one in Theorem \ref{theo:finite}. For the L\'{e}vy--Fokker--Planck equation \eqref{eqn:LFPEinfinite}, we rewrite it as
{\small\begin{equation}\label{eqn:FPE} 
    \begin{aligned}
        &\ \frac{\partial p_t(y,x)}{\partial t} 
        =\ \frac{\sigma^2}{2}  \frac{\partial^2}{\partial x^2 } p_t(y,x)  - \frac{\partial}{\partial x}\left[\left(b(x) - \int_{|z|<1} F(x,z)\nu(\d z)\right. \right.\\
        &\ - \int_{|z|<1}\int_0^1 \frac{ F(\mathcal{T}^{-1}_{F,\theta,z}(x),z) p_t(y,\mathcal{T}^{-1}_{F,\theta,z}(x))|\mathcal{J}_{F,\theta,z}(x)|  \d \theta\ \nu(\d z)}{p_t(y,x)}\\
         &\ \left.\left. - \int_{|z|\geq1}\int_0^1 \frac{ G(\mathcal{T}^{-1}_{G,\theta,z}(x),z) p_t(y,\mathcal{T}^{-1}_{G,\theta,z}(x))|\mathcal{J}_{G,\theta,z}(x)|  \d \theta\ \nu(\d z)}{p_t(y,x)}  \right)p_t(y,x) \right],
    \end{aligned}
\end{equation}
}which has the form of the general Fokker--Planck equation for the It\^{o} process. The ``new'' drift term 
{\small \begin{equation*}
    \begin{aligned}
        \widehat{b}^{x_0}(x,t):=&\ b(x) - \int_{|z|<1} F(x,z)\nu(\d z)  \\
        &\ + \int_{|z|>1}\int_0^1 \frac{ F(\mathcal{T}^{-1}_{F,\theta,z}(x),z) p_t(x_0,\mathcal{T}^{-1}_{F,\theta,z}(x))|\mathcal{J}_{F,\theta,z}(x)|  \d \theta \nu(\d z)}{p_t(x_0,x)}\\
        &\ + \int_{|z|\geq1}\int_0^1 \frac{ G(\mathcal{T}^{-1}_{G,\theta,z}(x),z) p_t(x_0,\mathcal{T}^{-1}_{G,\theta,z}(x))|\mathcal{J}_{G,\theta,z}(x)|  \d \theta \nu(\d z)}{p_t(x_0,x)},
    \end{aligned}
\end{equation*}}is well-defined for all $t>0$ since $p_t(x_0,\cdot)$ is smooth and positive. However, it becomes singular when $t=0$. Thus, the argument for well-posedness follows a similar approach as in Theorem \ref{theo:finite}, utilizing the concept of weak convergence. 
\end{proof}

\subsection{Short time estimate for the transition density function}
If we want to approximate the transition density of the jump-diffusion SDE \eqref{sdeBL} in short time limit, we need a result similar to Lemma \ref{lem:tp}. To the best of our knowledge, such results have only been proven for scalar cases, as illustrated in references \cite{figueroa2014small, figueroa2018small,figueroa2009small}. The remainder of this section will concentrate on the one-dimensional scenario with $F = G$.

Consider the scalar jump-diffusion process with generator \eqref{sdeBL2}. Note that the L\'{e}vy measure $\nu: \mathbb{R} \times \mathbb{R} \backslash \{0\} \to (0, \infty)$ may depend on the state variable. The corresponding L\'{e}vy--Fokker--Planck equation resembles equation \eqref{eqn:LFPEinfinite}; thus, we will omit the derivation here. To discuss the short-term estimate of the transition kernel for the jump-diffusion described by \eqref{sdeBL2}, we need to establish the following conditions. 
\begin{enumerate}[label=(C\arabic*)]
    \item \begin{enumerate}
        \item There exists a L\'{e}vy intensity $g$, which ``dominates'' the jump intensity function $\nu:\mathbb{R}\times(\mathbb{R}\backslash\{0\})\to(0,\infty)$ in the sense that $\nu(x,z)\leq g(z)$, for all $(x,z)\in\mathbb{R}\times(\mathbb{R}\backslash\{0\})$.
        \item We also assume that $\bar{\nu}(x,z):=\nu(x,z)/g(x)$ and $g$ are $C^4_b(\mathbb{R}\times[\varepsilon,\varepsilon]^c)$ and $C^4_b([\varepsilon,\varepsilon]^c)$, respectively, for any $\varepsilon>0$, and, furthermore,
        {\small\begin{equation*}
            \begin{aligned}
                \liminf_{z\to0^{\pm}}\inf_{x\in\mathbb{R}}\bar{\nu}(x,z)>0,\quad \limsup_{z\to0^{\pm}}\sup_{x\in\mathbb{R}}|z\partial_2\bar{\nu}(x,z)|<\infty,\\
                \limsup_{z\to0^{\pm}}\sup_{x\in\mathbb{R}}|z\partial^i_1\bar{\nu}(x,z)|<\infty,\quad i=0,1,2.
            \end{aligned}
        \end{equation*}}
    \end{enumerate}\label{S1}
    \item  The function $b:\mathbb{R}\to\mathbb{R}$  belongs to $C^4_b(\mathbb{R})$.\label{S2}
    \item The function $F(x,r):\mathbb{R}\times\mathbb{R}\to\mathbb{R}$ satisfies the following conditions:
    \begin{enumerate}
        \item It belongs to $C^4_b(\mathbb{R},\mathbb{R})$ and $F(x,0)=0$ for all $x\in\mathbb{R}$.
        \item For all $x,z\in\mathbb{R}$, $|\partial_2F(x,z)|>\eta$ for some constant $\eta>0$.
        \item For all $x,z\in\mathbb{R}$, $|1+\partial_1 F(x,z)|>\eta$ for some constant $\eta>0$.
    \end{enumerate}\label{S3}
    \item The L\'{e}vy density $g$ mentioned above is such that, for some $\alpha\in(0,2)$, $f(z):=g(z)|z|^{\alpha+1}$ is differentiable in $(-\varepsilon_0,0)\cup(0,\varepsilon_0)$, for some $\varepsilon_0>0$, and
    {\small\begin{equation*}
        \begin{aligned}
            \liminf_{z\to0^{\pm}} f(z)>0,\quad \limsup_{z\to0^{\pm}} f(z)<\infty,\quad \limsup_{z\to0^{\pm}} |zf'(z)|<\infty.
        \end{aligned}
    \end{equation*}}\label{S4}
\end{enumerate}
Under such conditions, we have a result analogous to Lemma \ref{lem:tp}, stated as follows:
\begin{lemma}\label{lem:tp2}(\cite[Theorem 6.1]{figueroa2018small} \& \cite[Theorem 5.2]{figueroa2014small})
Given Conditions \ref{S1}, \ref{S2}, \ref{S3} and \ref{S4}, let $p_t(y, x)$ denote the transition kernel of \eqref{sdeBL2}. Then,
{\small\begin{equation*}
\lim_{t \downarrow 0} \frac{p_t(y, x)}{t} = -\frac{\partial}{\partial (x-y)}\left(\int_{\{z : F(y, z) \geq  x-y\}}\nu(y,z)\d z\right)=:\nu_F(y,x-y),\quad x\neq y.
\end{equation*}}
\end{lemma}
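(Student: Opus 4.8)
The plan is to derive the stated first-order short-time behaviour of $p_t(y,x)$ from the small-time density expansions of \cite{figueroa2014small,figueroa2018small}, and to explain why the limiting coefficient is precisely $\nu_F(y,x-y)$. Fix $x\neq y$. The heuristic is that, since $x\neq y$, a trajectory of \eqref{sdeBL2} started at $y$ can reach a neighbourhood of $x$ in a vanishingly small time $t$ only through jumps, and, up to $o(t)$, the cheapest mechanism is a single macroscopic jump; the continuous part and the accumulated small jumps move the endpoint only by an amount tending to $0$ with $t$. To make this precise one first performs a L\'{e}vy--It\^{o}-type splitting of \eqref{sdeBL2} at a threshold $\varepsilon>0$, chosen small enough (depending on $x,y$) that the level set $\{z:F(y,z)=x-y\}$ together with a neighbourhood of it lies inside $\{z:|F(y,z)|>\varepsilon\}$: write the solution as a ``truncated'' process $X^{\varepsilon}$, driven by $\sigma B$, the drift, and the compensated jumps with $|F(X^{\varepsilon}_{s-},z)|<\varepsilon$, perturbed by the finitely many ``large'' jumps produced by the Poisson random measure on $\{(s,z):|F(X_{s-},z)|\ge\varepsilon\}$, whose intensity at state $u$ is $\Lambda_\varepsilon(u):=\int_{\{|F(u,z)|\ge\varepsilon\}}\nu(u,z)\,\d z$, finite by the domination $\nu(u,z)\le g(z)$ in \ref{S1} together with $F(u,0)=0$ and the $C^4_b$ regularity in \ref{S3}.

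Conditioning on the number of large jumps in $[0,t]$ then splits $p_t(y,x)$ into three pieces. (i)~On the event of no large jump the endpoint is $X^{\varepsilon}_t$, all of whose jumps are bounded by (essentially) $\varepsilon$; a concentration estimate for such processes, using the boundedness of the jumps together with the Gaussian and stable-like bounds from \ref{S4}, gives $\mathbb{P}(|X^{\varepsilon}_t-x|\le\rho)=o(t)$ for any fixed small $\rho>0$, in fact with faster-than-polynomial decay. (ii)~On the event of exactly one large jump, say at time $s\in(0,t)$ with mark $z$, a Taylor expansion of $F$ about the pre-jump state (which is within $o(1)$ of $y$ as $t\downarrow0$) shows that the endpoint concentrates at $y+F(y,z)$ while the jump time becomes asymptotically uniform on $(0,t)$ with intensity $\Lambda_\varepsilon(y)$; hence this event contributes, to leading order, $t$ times the push-forward of the measure $\nu(y,z)\mathbf{1}_{\{|F(y,z)|\ge\varepsilon\}}\,\d z$ under $z\mapsto y+F(y,z)$. (iii)~Two or more large jumps contribute $O(t^{2})$. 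The non-degeneracy hypotheses $|\partial_2F|>\eta$ and $|1+\partial_1F|>\eta$ of \ref{S3} guarantee that $z\mapsto F(y,z)$ is a local diffeomorphism off the origin (and that $\mathcal{T}_{F,\theta,z}$ is invertible), so the push-forward in (ii) has the density $-\frac{\partial}{\partial w}\big(\int_{\{z:F(y,z)\ge w\}}\nu(y,z)\,\d z\big)$ at $w=x-y$; by the choice of $\varepsilon$ the truncation $\mathbf{1}_{\{|F(y,z)|\ge\varepsilon\}}$ does not change this density at the fixed point $x$, so it equals $\nu_F(y,x-y)$. Collecting (i)--(iii) yields $p_t(y,x)=t\,\nu_F(y,x-y)+o(t)$ as $t\downarrow0$, which is the assertion; the full bookkeeping, including the regularity needed to justify the density representations, is carried out in \cite[Theorem 5.2]{figueroa2014small} and \cite[Theorem 6.1]{figueroa2018small}.

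The step I expect to be the main obstacle is (i): controlling the short-time probability that the truncated process $X^{\varepsilon}$ --- a genuine diffusion perturbed by infinitely many small jumps --- makes a macroscopic excursion away from $y$, and doing so with enough uniformity for it to be absorbed into $o(t)$. This is precisely where the $C^4_b$ assumptions in \ref{S1}--\ref{S3} and the stable-like lower and upper bounds on $g$ near the origin in \ref{S4} enter, through parametrix or Malliavin-calculus estimates for the transition density of $X^{\varepsilon}$; the same machinery is what gives the endpoint in (ii) a genuine density and lets one pass from ``the endpoint concentrates at $y+F(y,z)$'' to the explicit push-forward formula. The remaining estimates --- the single-jump asymptotics in (ii) and the $O(t^{2})$ bound in (iii) --- are comparatively routine once the one-jump decomposition of the event space and the dominating L\'{e}vy density $g$ are in place.
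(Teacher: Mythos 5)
Your sketch is consistent with how this result is actually established: the paper itself offers no proof of Lemma~\ref{lem:tp2} (it is quoted verbatim from the cited works of Figueroa-L\'{o}pez et al.), and those references prove it by exactly the decomposition you describe --- truncating the small jumps, conditioning on the number of large jumps, showing the no-jump and multi-jump events contribute $o(t)$ and $O(t^2)$ respectively, and identifying the one-jump contribution with the push-forward density $\nu_F(y,x-y)$, with the hard density estimates handled by parametrix/Malliavin machinery under Conditions \ref{S1}--\ref{S4}. So your proposal is correct and takes essentially the same route as the source the paper relies on.
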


\subsection{Discrete Onsager--Machlup functional for jump-diffusion processes with infinite jump activity}\label{subsection:discreteOM}
Under Lemma \ref{lem:tp2}, we know that the short time estimate of $p_t$ is determined by the function $\nu_F$. However, for each $x \in \mathbb{R}^d$, $\nu_F(x,\cdot)$ is undefined at 0. Consequently, it is infeasible to derive a continuous-time OM functional for jump-diffusions with infinite jump activity; however, a discrete version remains achievable. The finite-dimensional distribution for the jump-diffusion \eqref{sdeBL2} can be determined using a similar approach as previously outlined.
{\small   \begin{equation}\label{PI:finite3} 
    \begin{aligned}
      &\mathbb{P}(X_{t_i}\in A_i,\ i=1,\cdots,n) \\
      =&\ \int_{\substack{\psi\in C_{x_0,x_T[0,T]} \\ \psi_{t_i}\in A_i,i=1,\cdots,n}}\lim_{\varepsilon\to0}\exp\left\{ - \sum_{i=1}^n\int_{t_{i-1}}^{t_i} \left( \frac{1}{2\sigma^2}\bigg|\dot{\psi}^{(i)}_s   - b(\psi^{(i)}_s) + \int_{|z|<1} F(\psi^{(i)}_s,z)\nu(\psi^{(i)}_s,\d z) \right.\right.\\
        &\ - \int_0^1\int_{\mathbb{R}\backslash\{0\}} F(\mathcal{T}^{-1}_{F,\theta,z}(\psi^{(i)}_s),z)|\mathcal{J}_{F,\theta,z}(\psi^{(i)}_s)| \frac{p_s^{\psi_{t_{i-1}},\varepsilon}(\mathcal{T}^{-1}_{F,\theta,z}(\psi^{(i)}_s) )  }{ p_s^{\psi_{t_{i-1}},\varepsilon}(\psi^{(i)}_s )} \nu(\mathcal{T}^{-1}_{F,\theta,z}(\psi^{(i)}_s),\d z)\ \d\theta    \bigg|^2   \\
        &\   + \frac{1}{2} \nabla\cdot \left[ b(\psi^{(i)}_s) -\int_{|z|<1} F(\psi^{(i)}_s,z)\nu(\psi^{(i)}_s,\d z)  + \int_0^1 \int_{\mathbb{R}\backslash\{0\}} F(\mathcal{T}^{-1}_{F,\theta,z}(\psi^{(i)}_s),z)|\mathcal{J}_{F,\theta,z}(\psi^{(i)}_s)| \right. \\
        &\ \left.\left.\left. \times \frac{p_s^{\psi_{t_{i-1}},\varepsilon}(\mathcal{T}^{-1}_{F,\theta,z}(\psi^{(i)}_s) )  }{ p_s^{\psi_{t_{i-1}},\varepsilon}(\psi^{(i)}_s )} \nu(\mathcal{T}^{-1}_{F,\theta,z}(\psi^{(i)}_s),\d z)\ \d\theta  \right] \right)\d s \right\}\mathcal{D}(\psi)\\
        =&\ \int_{\substack{\psi\in C_{x_0,x_T[0,T]} \\ \psi_{t_i}\in A_i,i=1,\cdots,n}}\exp\left\{ - \sum_{i=1}^n\int_{t_{i-1}}^{t_i} \left( \frac{1}{2\sigma^2}\bigg|\dot{\psi}^{(i)}_s   - b(\psi^{(i)}_s) +   \int_{|z|<1} F(\psi^{(i)}_s,z)\nu(\psi^{(i)}_s,\d z) \right.\right.\\
        &\ -\int_0^1\int_{\mathbb{R}\backslash\{0\}} F(\mathcal{T}^{-1}_{F,\theta,z}(\psi^{(i)}_s),z)|\mathcal{J}_{F,\theta,z}(\psi^{(i)}_s)|\frac{\nu_F(\psi_{t_{i-1}},\mathcal{T}^{-1}_{F,\theta,z}(\psi^{(i)}_s) - \psi_{t_{i-1}} )  }{ \nu_F(\psi_{t_{i-1}},\psi^{(i)}_s  -\psi_{t_{i-1}} )}\\
        &\ \times \nu(\mathcal{T}^{-1}_{F,\theta,z}(\psi^{(i)}_s),\d z)\ \d\theta    \bigg|^2  + \frac{1}{2} \nabla\cdot \left[ b(\psi^{(i)}_s)  -  \int_{|z|<1} F(\psi^{(i)}_s,z)\nu(\psi^{(i)}_s,\d z)  \right.\\
        &\ \left.\left.\left.  + \int_0^1 \int_{\mathbb{R}\backslash\{0\}} F(\mathcal{T}^{-1}_{F,\theta,z}(\psi^{(i)}_s),z)|\mathcal{J}_{F,\theta,z}(\psi^{(i)}_s)|  \frac{\nu_F(\psi_{t_{i-1}},\mathcal{T}^{-1}_{F,\theta,z}(\psi^{(i)}_s)-\psi_{t_{i-1}} )  }{ \nu_F(\psi_{t_{i-1}},\psi^{(i)}_s -\psi_{t_{i-1}})}  \right] \right.\right.\\
        &\  \times \d\nu(\mathcal{T}^{-1}_{F,\theta,z}(\psi^{(i)}_s),z)\ \d\theta + \mathcal{O}(T/n)\bigg)\d s \bigg\}\mathcal{D}(\psi).
    \end{aligned}
\end{equation}}Note that the term $\nu_F(\psi_{t_{i-1}},\psi^{(i)}_s-\psi_{t_{i-1}})$ formally tends to $\nu_F(\psi_{t_{i-1}},0)$ as $s\to t_{i-1}$ when $n\to\infty$. Thus we cannot obtain a continuous time OM functional when $n\to\infty$ since $\nu_F(\psi_{t_{i-1}},0)$ is not defined, as discussed as before.

Recall that velocities can be approximated by the differences between positions. Let us now approximate \eqref{PI:finite3} as follows:
{\small \begin{equation}\label{PI2}
    \begin{aligned}
        &\ \mathbb{P}(X_{t_i}\in A_i,\ i=1,\cdots,n)\\
        =&\ \int_{\prod_{i=1}^{n-1}A_i}  \exp\left\{ - \sum_{i=1}^n  \frac{1}{2\sigma^2}\left| \frac{x_{i}-x_{i-1}}{\Delta t} - b(x_i) + \int_{|z|<1}F(x_i,z)\nu(x_i,\d z) \right. \right. \\
        &\  \left. - \int_0^1\int_{\mathbb{R}\backslash\{0\}} F(\mathcal{T}^{-1}_{F,\theta,z}(x_i),z )|\mathcal{J}_{F,\theta,z}(x_i)|\frac{\nu_F(x_{i-1},\mathcal{T}^{-1}_{F,\theta,z}(x_i) - x_{i-1} )}{\nu_F(x_{i-1},x_i-x_{i-1})} \right. \\
        &\ \times \nu(\mathcal{T}^{-1}_{F,\theta,z}(x_i),\d z)\ \d\theta  \bigg|^2 \Delta t + \frac{1}{2}\sum_{i=1}^n\Delta t \frac{\d}{\d x}\left(b(x) - \int_{|z|<1}F(x,z)\nu(x,\d z)  \right. \\
        &\ + \int_0^1\int_{\mathbb{R}\backslash\{0\}} F(\mathcal{T}^{-1}_{F,\theta,z}(x),z )|\mathcal{J}_{F,\theta,z}(x)|\frac{\nu_F(x_{i-1},\mathcal{T}^{-1}_{F,\theta,z}(x)- x_{i-1} )}{\nu_F(x_{i-1},x - x_{i-1})} \\
        &\  \times \nu(\mathcal{T}^{-1}_{F,\theta,z}(x),\d z)\ \d\theta \bigg)\bigg|_{x=x_i}  + \mathcal{O}(\Delta t) \bigg\} \left(\frac{1}{\sqrt{(2\pi)\Delta t}}\right)^n\d x_1\cdots\d x_{n-1},
    \end{aligned}
\end{equation}}
where we use the Euler method for process \eqref{sdeBL2} with right endpoint as
{ \begin{equation*}
    \begin{aligned}
         x_i=&\ x_{i-1} + \left( b(x_i) - \int_{|z|<1}F(x_i,z)\nu(\d z) - \int_0^1\int_{\mathbb{R}\backslash\{0\}} F(\mathcal{T}^{-1}_{F,\theta,z}(x_i),z )|\mathcal{J}_{F,\theta,z}(x_i)| \right. \\
        &\ \left.  \times \frac{\nu_F(x_{i-1},\mathcal{T}^{-1}_{F,\theta,z}(x_i) -x_{i-1})}{\nu_F(x_{i-1},x_i - x_{i-1})} \nu(\mathcal{T}^{-1}_{F,\theta,z}(x_i),\d z)\ \d\theta \right)\Delta t  + \sigma(B_i-B_{i-1}).
    \end{aligned}
\end{equation*}}Now, \eqref{PI2} gives us a path integrals like representations for a ``discrete path'' $\{x_i\}_{i=0}^n$ as given by \eqref{dOM}. We summarize the above results in the following theorem. \begin{theorem}\label{theo:infinite}
    For the jump-diffusion SDE \eqref{sdeBL} with $F=G$ and $d=1$, assuming that Assumption \ref{assum:bnu} and Conditions \ref{S1}, \ref{S2}, \ref{S3} and \ref{S4} are satisfied, for any collection of sets $\{A_i,i=1,\cdots,n\}$ and a time partition $t_1\leq t_2\leq\cdots\leq t_n$, it follows that
    \begin{equation*} 
    \begin{aligned}
        &\ \mathbb{P}(X_{t_i}\in A_i,\ i=1,\cdots,n)\\
        =&\ \int_{\prod_{i=1}^{n-1}A_i}  \exp\left\{ - \sum_{i=1}^n  \frac{1}{2\sigma^2}\left| \frac{x_{i}-x_{i-1}}{\Delta t} - b(x_i) + \int_{|z|<1}F(x_i,z)\nu(x_i,\d z) \right. \right. \\
        &\  \left. - \int_0^1\int_{\mathbb{R}\backslash\{0\}} F(\mathcal{T}^{-1}_{F,\theta,z}(x_i),z )|\mathcal{J}_{F,\theta,z}(x_i)|\frac{\nu_F(x_{i-1},\mathcal{T}^{-1}_{F,\theta,z}(x_i) - x_{i-1} )}{\nu_F(x_{i-1},x_i-x_{i-1})} \right. \\
        &\ \times \nu(\mathcal{T}^{-1}_{F,\theta,z}(x_i),\d z)\ \d\theta  \bigg|^2 \Delta t + \frac{1}{2}\sum_{i=1}^n\Delta t \frac{\d}{\d x}\left(b(x) - \int_{|z|<1}F(x,z)\nu(x,\d z)  \right. \\
        &\ + \int_0^1\int_{\mathbb{R}\backslash\{0\}} F(\mathcal{T}^{-1}_{F,\theta,z}(x),z )|\mathcal{J}_{F,\theta,z}(x)|\frac{\nu_F(x_{i-1},\mathcal{T}^{-1}_{F,\theta,z}(x)- x_{i-1} )}{\nu_F(x_{i-1},x - x_{i-1})} \\
        &\  \times \nu(\mathcal{T}^{-1}_{F,\theta,z}(x),\d z)\ \d\theta \bigg)\bigg|_{x=x_i}  + \mathcal{O}(\Delta t) \bigg\} \left(\frac{1}{\sqrt{(2\pi)\Delta t}}\right)^n\d x_1\cdots\d x_{n-1}.
    \end{aligned}
\end{equation*}
\end{theorem}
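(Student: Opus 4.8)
The plan is to run, for the infinite-activity generator \eqref{sdeBL2}, the same three-stage argument that produced Theorem~\ref{theo:PI}: express the finite-dimensional law as an iterated integral of short-time transition kernels; replace each kernel by that of the equivalent mollified diffusion \eqref{sdeB} and apply the classical Onsager--Machlup path-integral formula to it; and finally feed the short-time density asymptotics of Lemma~\ref{lem:tp2} into the ratio appearing in the drift, and Euler-discretize the resulting path integral.

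First I would invoke the Markov property and Assumption~\ref{assum:bnu} to write $\mathbb{P}(X_{t_i}\in A_i,\ i=1,\dots,n)$ as an iterated integral of the short-time kernels $p_{\Delta t}(x_{i-1},x_i)$. By the probability-flow equivalence established above for the generator \eqref{sdeBL2} (the scalar, $F=G$ instance), each factor equals $\lim_{\varepsilon\to0}\widehat p^{\,x_{i-1},\varepsilon}_{\Delta t}(x_{i-1},x_i)$, the transition density of the It\^o SDE \eqref{sdeB} started from $\mathsf N_{x_{i-1},\varepsilon}$. For $\varepsilon>0$ the drift of \eqref{sdeB} is smooth, so the classical path-integral representation \eqref{eq:PISDE} (equivalently, Girsanov's theorem) holds on each subinterval $[t_{i-1},t_i]$, contributing to the exponent the quadratic term $\tfrac1{2\sigma^2}|\dot\psi^{(i)}_s-\widehat b^{\,x_{i-1},\varepsilon}(\psi^{(i)}_s,s)|^2$ together with $\tfrac12$ times the spatial derivative of $\widehat b^{\,x_{i-1},\varepsilon}$.

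Next I would pass to the limit $\varepsilon\to0$ inside the exponent. The only $\varepsilon$-dependence sits in the ratio $p^{x_{i-1},\varepsilon}_s(\mathcal T^{-1}_{F,\theta,z}(\psi^{(i)}_s))/p^{x_{i-1},\varepsilon}_s(\psi^{(i)}_s)$ and its $x$-derivative inside $\widehat b^{\,x_{i-1},\varepsilon}$. Since $s-t_{i-1}\in(0,\Delta t]$, Lemma~\ref{lem:tp2} — $p_s(y,x)/s\to\nu_F(y,x-y)$ as $s\downarrow0$, used in a form uniform on compacta bounded away from the diagonal $x=y$ — combined with $p^{x_{i-1},\varepsilon}_s=\int p_s(y,\cdot)\,\mathsf N_{x_{i-1},\varepsilon}(\d y)$ and the weak convergence $\mathsf N_{x_{i-1},\varepsilon}\Rightarrow\delta_{x_{i-1}}$, shows the ratio converges to $\nu_F(\psi_{t_{i-1}},\mathcal T^{-1}_{F,\theta,z}(\psi^{(i)}_s)-\psi_{t_{i-1}})/\nu_F(\psi_{t_{i-1}},\psi^{(i)}_s-\psi_{t_{i-1}})$ up to an $\mathcal O(\Delta t)$ correction that is uniform over uniformly bounded paths, Conditions~\ref{S1}--\ref{S4} making the $\theta$- and $z$-integrands integrable and the errors controllable. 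The analogous statement for the $x$-derivative requires a short-time estimate of $\partial_x p_s$, which I would take from the higher-order terms of the expansions in \cite{figueroa2018small,figueroa2014small} or, if needed, secure by a uniform-convergence hypothesis of the type of Assumption~\ref{ass:uniform}. This produces the continuous-path representation \eqref{PI:finite3}. Because $\nu_F(\psi_{t_{i-1}},\cdot)$ is undefined at the origin one cannot let $n\to\infty$; instead I would Euler-discretize \eqref{PI:finite3} with the right-endpoint scheme for \eqref{sdeB}, replacing $\dot\psi^{(i)}_s$ by $(x_i-x_{i-1})/\Delta t$, $\psi^{(i)}_s$ by $x_i$, $\psi_{t_{i-1}}$ by $x_{i-1}$, and $\int_{t_{i-1}}^{t_i}(\cdot)\,\d s$ by $(\cdot)\big|_{x=x_i}\,\Delta t$; the discretized Wiener-measure normalization supplies the factor $(2\pi\Delta t)^{-n/2}$, the path integral collapses to the finite-dimensional integral over $\{x_i\}$, and all discretization errors are gathered into the single $\mathcal O(\Delta t)$ term in the exponent. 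Reading off the quadratic and derivative terms reproduces exactly $S_X^{\mathrm{dOM}}$ of \eqref{dOM}.

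The main obstacle will be making this last limit genuinely uniform: one must replace the $\varepsilon$-mollified density ratio by the $\nu_F$-ratio on the whole tube of admissible paths while $\Delta t$ stays fixed and positive, and since $\nu_F(y,\cdot)$ blows up (or degenerates) near $0$, the error must be controlled \emph{quantitatively} in $s-t_{i-1}\in(0,\Delta t]$ rather than through a naive pointwise passage to the limit. This is precisely where the restriction to $d=1$, $F=G$ and Conditions~\ref{S1}--\ref{S4} are indispensable, since the requisite small-time asymptotics of the transition density and its derivative are available only in the scalar setting.
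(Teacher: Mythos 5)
Your proposal follows essentially the same route as the paper's own derivation: probability-flow equivalence on each subinterval via the mollified diffusion \eqref{sdeB}, the classical path-integral/Girsanov representation of its transition kernel, the short-time asymptotics of Lemma~\ref{lem:tp2} to replace the mollified density ratio by the $\nu_F$-ratio (yielding \eqref{PI:finite3}), and a right-endpoint Euler discretization that collapses the path integral into the finite-dimensional representation \eqref{PI2} with the discrete functional \eqref{dOM}. Your observation that the divergence term requires a short-time estimate of $\partial_x p_s$ (an Assumption~\ref{ass:uniform}-type hypothesis) is a point the paper relies on implicitly as well, so your argument matches the paper's in both structure and level of rigor.
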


\begin{remark}

We compare our derived Onsager--Machlup (OM) functional, as indicated in \eqref{dOM}, with the OM functional outlined by \cite{chao2019onsager}. In their analysis, the function $F(x,z)$ is simplified to $z$, the Lévy measure $\nu$ does not depend on the state variable $x$, and the dimension is $d=1$. If we neglect the non-local integral component of \eqref{dOM} specified by: 
\begin{equation}\label{nonlocal}
    \int_0^1\int_{\mathbb{R}\backslash\{0\}} F(\mathcal{T}^{-1}_{F,\theta,z}(x_i),z )|\mathcal{J}_{F,\theta,z}(x_i)|\frac{\nu_F(x_{i-1},\mathcal{T}^{-1}_{F,\theta,z}(x_i) -x_{i-1})}{\nu_F(x_{i-1},x_i-x_{i-1})} \nu(\mathcal{T}^{-1}_{F,\theta,z}(x_i),\d z)\ \d\theta ,
\end{equation}
and consider a smooth path $\phi$, with $x_i = \phi_{t_i}$ for $0=t_0<t_1<\cdots<t_n=T$, the other terms of $S_X^{\mathrm{dOM}}$ converge to
{\small\begin{equation*}
    \begin{aligned}
        &\sum_{i=1}^n  \frac{1}{2\sigma^2}\left| \frac{x_{i}-x_{i-1}}{\Delta t} - b(x_i) + \int_{|z|<1}z\nu(\d z)   \right|^2 \Delta t + \frac{1}{2}\sum_{i=1}^n b'(x_i) \Delta t\\
       \to&\ \frac{1}{2}\int_0^T \left( \frac{ \left| \phi^2_s-b(\phi_s) + \int_{|z|<1}z\nu(\d z)\right|^2}{\sigma^2} + b'(\phi_s)\right)\d s,\quad n\to\infty,
    \end{aligned}
\end{equation*}
}which matches the OM functional derived in \cite[equation (4.25)]{chao2019onsager}, albeit with a constant difference.

However, the non-local integral in \eqref{nonlocal} plays a crucial role in accurately evaluating the probability of paths within a specified tube set, thereby essential for deriving a complete OM functional. The omission of this integral in \cite[equation (4.25)]{chao2019onsager} occurs because the primary approach used involves applying the Girsanov theorem to evaluate the probability of paths within a tube set after changing the probability measure from $\mu_X$ (induced by the jump-diffusion) to $\mu_{W^c}$ induced by $W^c:=\sigma W+\int_0^t\int_{|z|<1}z\mathcal{N}(\d s,\d z)$, as shown in equation (4.4) therein. This probability is then considered as the sum of all paths around a reference tube, involving a Poisson integral related to the non-local integral \eqref{nonlocal}, as elaborated in \cite[Section 4 \& Appendix]{chao2019onsager}. Nonetheless, the Poisson integrals for trajectories within the tube set around the reference path were overlooked in \cite[equation (4.25)]{chao2019onsager}. This omission is significant because  in any tube set, there is no uniform upper limit on the number of jumps for all sample trajectories of the L\'{e}vy process. This means the Onsager--Machlup functional as presented in \cite[equation (4.25)]{chao2019onsager} offers an approximation, providing an incomplete characterization of the paths in jump-diffusion processes with infinite jump activity.
\end{remark}
 
From the discussion above, it is evident that for jump-diffusion processes with infinite jump activity, although the corresponding Onsager--Machlup functional cannot be realized in a continuous-time format, its discrete form, as indicated in \eqref{dOM}, remains valid for estimating the finite-dimensional distribution of the stochastic system, as shown in \eqref{PI2}.

\section{Conclusion}\label{discussion}
We have successfully derived the Onsager--Machlup functional for jump-diffusion processes with finite jump activity. The key tool of our work is the probability flow between the Levy-Fokker-Planck equation and the Fokker-Planck equation for diffusion process, which makes it possible to reactivate the established methods of path integrals and the Girsanov theorem. This connection provides a new and powerful strategy for analysing non-Gaussian stochastic systems by exploiting the results for Gaussian systems.

For the jump-diffusion SDE  with  infinite jump activity, we have formulated a discrete variant of the Onsager--Machlup functional on any finite number of discrete time partitions. This  discrete version effectively captures the behavior of these complex stochastic systems and also facilitates the numerical calculation of the minimum  action paths associated with the   Onsager--Machlup functional. 

\bibliographystyle{siamplain}
\bibliography{references}

\end{document}